\newcommand{%
    
    \import{./figures/}{.pdf_tex}
}[1]{%
    
    \import{./figures/}{#1.pdf_tex}
}
\newcommand{\R}{\mathbb{R}}
\newcommand{\Z}{\mathbb{Z}}
\newcommand{\tcb}{\textcolor{blue}}
\def\1{1\hspace{-.7ex} \rm{I}}
\newcommand{\bqr}{\begin{eqnarray}}
\newcommand{\eqr}{\end{eqnarray}}
\newcommand{\bqre}{\begin{eqnarray*}}
\newcommand{\eqre}{\end{eqnarray*}}
\newcommand{\alali}{$\mbox{ }$\\}
\newtheorem{theorem}{Theorem}[section]
\newtheorem{proposition}{Proposition}[section]
\newtheorem{lemma}{Lemma}[section]
\newtheorem{definition}{Definition}[section]
\newtheorem{remark}{Remark}[section]
 \numberwithin{equation}{section}
\title{ Entropy solutions in $BV^{s}$  for a class of triangular systems involving a transport equation}
\author{
Christian Bourdarias \thanks{Universit\'e  Savoie-Mont Blanc, LAMA,  Chamb\'ery, France, bourdarias@univ-savoie.fr},
Anupam Pal Choudhury \thanks{School of Mathematical Sciences, 
National Institute of Science Education and Research, Bhubaneswar 752050, India, and 
Homi Bhabha National Institute, Training School Complex, Anushaktinagar, Mumbai 400094, India, anupampcmath@gmail.com},\\
 Billel Guelmame \thanks{Universit\'{e} C\^ote d'Azur, Inria \& CNRS, LJAD,   Nice,  France,  Billel.Guelmame@univ-cotedazur.fr}, 
 St\'ephane Junca \thanks{Universit\'{e} C\^ote d'Azur, Inria \& CNRS, LJAD,   Nice,  France,  Stephane.Junca@univ-cotedazur.fr}
}
\date{}
\begin{document}
\everymath{\displaystyle}
\noindent

\maketitle 

{ABSTRACT. --}{
 \   In this article, we consider a class of strictly  hyperbolic triangular systems involving a transport equation. Such systems are known to create measure solutions for the initial value problem. 
  Adding  a stronger transversality assumption on the fields, we are able to obtain solutions in $L^\infty$ under optimal  fractional $BV$ regularity of the initial data.
Our results show that the critical fractional regularity is $s=1/3$. We also construct an initial data that is not in  $BV^{1/3}$ but for which a blow-up in $L^\infty $ occurs, proving the optimality of our results.
}

\medskip

\noindent {\bf AMS Classification}: 35L65, 35L67, 35Q35,  26A45, 76N10.

\medskip

\noindent {\bf Key words}: conservation laws; triangular systems; entropy solution;
  fractional $BV$ spaces;  linear transport;  chromatography;    Cauchy problem; non convex flux;    regularity.
 {\small}
 
\tableofcontents
 
\section{Introduction}  \label{sec:in}
We consider triangular systems of the form, $t>0$, $x \in\R$, 
\begin{align}
&\partial_t  u     +   \partial_x  f(u) = 0  ,  \label{eq:scl}\\
&\partial_t  v     +   \partial_x  \left( a(u) v \right)  = 0  \label{eq:lin}.
\end{align}
Here $f$ is the scalar flux function (which we shall henceforth refer to as the flux)  for the equation \eqref{eq:scl} and the function $a$ denotes the velocity of the linear  equation with respect to $v$ \eqref{eq:lin}.
The system has   a decoupled nonlinear conservation law  and a coupled ``linear'' transport equation  with a discontinuous velocity. 
The above system is complemented by a set of initial data, $x \in \R$, 
\begin{align}\label{u0}
u(x,0)=u_0(x), \\ 
 v(x,0)=v_0(x).\label{v0}
\end{align}
 The Pressure Swing Adsorption  process  (PSA)  in chemistry  \cite{BGJ0}  has such  a triangular structure  after a change of variables from Euler to Lagrange \cite{BGJP,P07}.
Such systems are already of mathematical interest due to the coupling  of the theory of scalar conservation laws with the theory of transport equations.  This system was  studied in  \cite{LF90} in a non hyperbolic setting, $f'=a$,  with measure solutions for $v$. 
That even a strictly hyperbolic setting is not enough to avoid measure solutions was shown in \cite{DM08,HL}.
 Here, strengthening the hyperbolicity of the system by a stronger transversality condition,  global weak  bounded entropy solutions are provided with an optimal fractional $BV$ regularity for the initial data $u_0$. 

  
At the first sight, such  a system seems easy to solve in the ``triangular'' manner, that is solving the first equation \eqref{eq:scl} to get $u$ and then solving the linear equation \eqref{eq:lin} keeping $u$ fixed.
This method works well for smooth solutions \cite{GGJJ}, but, when a shock wave appears in $u$ the velocity $a(u)$ becomes discontinuous.   The theory of linear transport equations with discontinuous velocity is a delicate topic  yielding measure solutions and a loss of uniqueness \cite{BJ98,BJ99,PR}. 

 In this paper, we propose a different approach to obtain  global weak solutions in $L^\infty$. 
A main idea is to consider the system  \eqref{eq:scl}, \eqref{eq:lin} not as a triangular system, but,  as a $2\times2$ hyperbolic system as in \cite{BaJe}.    If $f$ is nonlinear, one field is nonlinear and the other one is linearly degenerate.  When $f$ is uniformly convex it is a particular case of $2\times2$ systems with one genuinely nonlinear field and a linearly degenerate one \cite{HJ}.
 An example of such a system when the flux  $f$ is piecewise convex or concave arises from gas-liquid chromatography \cite{BGJP}.    
 The presence of linearly degenerate fields can simplify the study of solutions  \cite{P98,P07},
but can also produce blow-up behaviours \cite{BGJP,NS}.  
 
We show that the behaviour of  the proposed entropy solutions  is linked to the fractional $BV$ regularity of $u$, indeed, $u_0 \in BV^s$, $0<s \leq 1$.  
  The  $BV^s$ framework is optimal to study the  regularity of  entropy solutions of scalar conservation laws \cite{CJ1,EM}. This framework is recalled in Section \ref{sec:basic}. For the scalar case, the theory works well for all $s>0$. For the triangular system \eqref{eq:scl}, \eqref{eq:lin}, we prove  that the regularity $s=1/3$ of $u_0$ is critical  for the existence of $L^\infty$ entropy solutions.  
  The exponent $s=1/3$ is directly linked to the  cubic estimate on the Lax curves \cite{Lax57}.  For nonconvex fields, it is known that the Lax curves are less regular \cite{B02,AM04,LF02}. However, for the triangular system \eqref{eq:scl}, \eqref{eq:lin} with the flux $f\in C^4$,  the velocity $a \in C^3$, and satisfying a uniformly strict hyperbolicity assumption, we prove that the exponent $s=1/3$ is optimal for the existence theory  of  bounded entropy solutions. 
  Our proof is based on a new cubic estimate that holds even when the flux $f$ is non-convex.
 Our estimate generalizes the well known cubic estimates for genuinely nonlinear (convex) systems  by  Lax \cite{Lax57}.

The paper is organized as follows. The hyperbolicity of the triangular system  and the key assumptions are given in Section \ref{sec:triangle}.
 The definitions of weak and  entropy solutions  are stated in Section \ref{ss:entropy}.
 The $BV^s$ framework is recalled in Section \ref{sec:basic}.
 In Section \ref{sec:main}, the two main results are stated: existence  for  {\color{black}{$s \geq 1/3$}} and blow-up  for $s < 1/3$.
In Section \ref{sec:hyp}, the  Riemann invariants and  the Riemann problem are studied. 
The Lax curves  and the key cubic estimates are studied in Section \ref{s:Lax-curves}.  The proof of existence is discussed in Section \ref{sec:exist} and the blow-up is studied in Section \ref{sec:boum}. In the appendices \ref{lipschitz} and \ref{uniq-riem}, we discuss on the uniqueness of the Riemann problem in the class of bounded weak entropy solutions.

\subsection{The hyperbolic triangular system} \label{sec:triangle}

The system  \eqref{eq:scl}, \eqref{eq:lin} of conservation laws  is  hyperbolic  when $f'\neq a$ and can be rewritten, using the vectorial flux $\textbf{F}$, as $\partial_t  \textbf{U}+ \partial_x \textbf{F}(\textbf{U})=0 $, $\textbf{U}=(u,v)^\top$. The  matrix of the linearized system has a triangular structure,
$$
 D\textbf{F}(\textbf{U})=  \left(  \begin{array}{cc}  f'(u) & 0  \\ a'(u) v &  a(u) \end{array}\right).
$$ 
This sytem  has the unbounded invariant region $[-M,M]_u \times \R_v $,
 where $M = \|u_0\|_\infty$.\\
 In this paper, the system is assumed to be strictly hyperbolic through the condition
\begin{align}\label{Hyp:str-hyperbo}
  \forall u  \in  [-M,M],  \qquad   f'(u)   >  a(u).
\end{align}
Of course, the symmetric asumption: $\forall u  \in  [-M,M],\,   f'(u) < a(u) $, yields a similar study. \\ 
For large data, the strict hyperbolicity condition \eqref{Hyp:str-hyperbo} has to be strengthened on the set $[-M,M]$, by the following uniformly strict hyperbolicity (USH) condition:
 \begin{align}\label{USH}
  \inf_{|u| \leq M} f'(u)   >   \sup_{|u| \leq M} a(u).
\end{align}
An interesting  case  is already when $f$ is convex, $f''>0 $ everywhere, or concave  $f''<0 $.   This case  occurs for  a chromatography system with  a convex isotherm written in appropriate Lagrangian coordinates \cite{BGJP}.   
 \\
 \noindent
In this paper,  the flux $f(\cdot)$ belongs  to   $C^4$ and the velocity $a(\cdot)$ belongs to  $C^3$.
Few times  in the paper,  to present the nonconvex case, the flux is assumed to have a finite number of inflection points,
\begin{align}\label{Hyp:loc-finite}
  \mathcal{Z}= \{u,   \; f''(u) = 0,\; |u| \leq M  \}  \mbox{ is finite}.
\end{align}
Thus, Riemann problems  are solved by  a finite number of elementary waves, the so called composite wave \cite{HR}. The finiteness of the number of inflection points  is not mandatory for the existence Theorem \ref{main} below, because  the proof uses  approximate  piecewise linear  flux $f_\nu$.  The important point is that $f_\nu$ is also 
 locally piecewise convex or concave so the solution of the Riemann problem has only a finite number of wavefronts \cite{D72}.

\subsection{Weak and entropy solutions}  \label{ss:entropy}
A weak solution of  the system \eqref{eq:scl}-\eqref{eq:lin} satisfying the initial conditions  \eqref{u0}-\eqref{v0} is defined as follows.
\begin{definition}[weak solution] \label{weak-solution}
The pair $(u,v)$  is a weak solution of the system  \eqref{eq:scl}-\eqref{eq:lin} 
with initial data \eqref{u0}-\eqref{v0} if  for all compactly supported  test functions $\varphi, \psi \in C^1_c(\R \times [0,+\infty[, \R)$, the following integral identities hold:
\begin{align}
\int_0^{+\infty} \int_\R    \left( u \, \partial_t \varphi +  f(u) \, \partial_x \varphi  \right)  dx \, dt 
+   \int_\R    u_0(x)   \,\varphi(x,0)  dx     & = 0, \label{int-identity-1}  \\
\int_0^{+\infty} \int_\R    \left( v  \, \partial_t \psi +  a(u)\, v  \,\partial_x \psi \right)  dx \, dt 
+   \int_\R    v_0(x)  \, \psi(x,0)  dx     & = 0. \label{int-identity-2}
\end{align}
\end{definition} 
The  following  regularity is usually required for $(u,v)$: $u \in L^\infty_{loc}$,  $v \in L^1_{loc}$ or  $v$ is  a measure.  
In the case when $v$ is a measure, there are some issues in defining the product $a(u) \times v$ \cite{BJ98}. 
 But our main focus in this paper is on bounded weak solutions $u, v \in L^\infty$.

We propose below a notion of entropy solutions for the system \eqref{eq:scl}-\eqref{eq:lin}. 
As in \cite{BGJ2,JuLo5} the entropy condition is only tested on the nonlinear component $u$.  Contact discontinuity waves linked to a linearly degenerate field are well known not to affect the entropy inequality which, therefore, remains an equality \cite{D16}.
\begin{definition}[entropy solution]
The pair $(u,v)$ is an entropy solution of the system  \eqref{eq:scl}-\eqref{eq:lin} 
with initial data \eqref{u0}-\eqref{v0} if it is a weak solution  and for all convex function $\eta$ and all non-negative test functions $\varphi\in C^1_c(\R \times ]0,+\infty[, \R)$,  with $q'=\eta'  f'$, $u$ satisfies the following inequality:
\begin{align}
\int_0^{+\infty} \int_\R    \left( \eta(u) \, \partial_t \varphi +  q(u) \, \partial_x \varphi  \right)  dx \, dt 
+   \int_\R    \eta(u_0(x))  \, \varphi(x,0)  dx  
 \geq 0 . 
\end{align}
\end{definition} 

Thus,
it suffices only to have a weak solution of the system and the entropy solution of 
\eqref{eq:scl}.  

Uniqueness of entropy solutions with a fixed initial data $(u_0,v_0)$ is a delicate matter \cite{Br00}. It is due to the lack of uniqueness of weak solutions for the linear transport equation with a discontinuous velocity (\cite{BJ98, BJ99}). \\
%
%


%
For triangular systems as discussed in this paper, one can have measure solutions.
A prototypical example of such triangular systems  \cite{HL} is given by 
\begin{equation}
 \begin{aligned}
 \partial_t  u     +   \partial_x  \left[\frac{u^{2}}{2}\right] &= 0  ,  \\
\partial_t  v     +   \partial_x  \left[ (u-1) v \right]  &= 0,
 \end{aligned}
 \label{example}
 \end{equation}
 with $f(u)=\frac{u^2}{2} $ and $a(u)=u-1 $. Clearly this satisfies the strict hyperbolicity condition \eqref{Hyp:str-hyperbo}  but not the more restrictive uniform strict hyperbolicity condition \eqref{USH} for too large data. 
 %
%
Similar phenomenon is well known for transport equations  \cite{BJ98} and is also widely observed in the case of non-strictly hyperbolic systems \cite{BJ99}.

The main approach in solving the system \eqref{example} in \cite{HL} was to observe that the first equation in $u$ can be solved independently. The second equation can then be considered as a transport equation in $v$ with a discontinuous coefficient $u$ (see also \cite{BJ98}),
but this idea of looking at the two equations separately might not be a good one. Our aim in this article is to prove the existence of solutions to systems of type \eqref{example} in the class of fractional BV functions and hence one need not appeal to $\delta$-shock wave type solutions.  
Moreover, to obtain $L^\infty$ solution with a wave front algorithm, we suppose that the system \eqref{eq:scl}, \eqref{eq:lin} is uniformly strictly hyperbolic \eqref{USH}. Note that we already need the uniform strict hyperbolicity condition \eqref{USH} to solve the Riemann problem appropriately. 
\subsection{$BV^s$ functions} \label{sec:basic}
 For  one dimensional scalar conservation laws, the  spaces $BV^s$  are known to give optimal results   for weak entropy solutions  \cite{BGJ6,CJ1,EM,EMproc}, 
 first on the fractional Sobolev regularity, 
   and  second on the structure of such solutions.
    Such results on the  maximal regularity have been extended for some systems  and the multidimensional case \cite{CJ2, GGJJ}.
\\
In this section, basic facts on $BV^s$ functions are recalled.

\begin{definition}\cite{MO}
A function $u$ is said to be in $BV^{s}(\mathbb{R})$ with $0<s\leq 1 $ if $TV^{s} u < +\infty $, where 
\begin{equation}
TV^{s}u:= \sup_{n \in \mathbb{N},\ x_1<\dots<x_n} \sum_{i=1}^{n} \vert u(x_{i+1})-u(x_{i}) \vert^{1/s} .
\notag
\end{equation}
The $BV^{s}$ semi-norm is defined by
\begin{equation}
\vert u \vert_{BV^{s}} := (TV^{s}u)^{s},
\notag
\end{equation}
and a norm on this space is defined by
\begin{equation}
\| u \|_{BV^s}:= \| u \|_{L^{\infty}}+| u |_{BV^{s}}.
\notag
\end{equation}
\end{definition}
 Fractional $BV$  functions have  traces like $BV$ functions.   This is a fundamental property  to define  the Rankine-Hugoniot condition for shock waves. Morerover, this property is not true for the Sobolev functions in $W^{s,1/s}$, the  Sobolev space nearest to $BV^s$ \cite{BGJ6}.
\begin{theorem}\cite{MO}
 For all $s \in ]0,1[$, $BV^s$ functions are   regulated functions.
\end{theorem}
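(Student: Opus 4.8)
The plan is to argue by contradiction directly from the definition of $TV^s$, exploiting the elementary fact that a fixed positive jump of $u$ that recurs arbitrarily often inside a shrinking neighbourhood already forces $TV^s u = +\infty$. Recall that $u$ is regulated on $\R$ precisely when the one-sided limits $u(x_0^\pm)$ exist at every $x_0 \in \R$; equivalently, $\limsup_{x\to x_0^+}u(x) = \liminf_{x\to x_0^+}u(x)$, and the symmetric identity holds from the left.

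First I would fix $x_0 \in \R$ and suppose, for contradiction, that the right-hand oscillation is positive, i.e. $L := \limsup_{x\to x_0^+}u(x) > \ell := \liminf_{x\to x_0^+}u(x)$; put $\eps := (L-\ell)/3 > 0$. Then in every interval $(x_0, x_0+\delta)$ there are points on which $u > L - \eps$ and points on which $u < \ell + \eps$. I would then build, by a downward induction on a shrinking scale, a strictly decreasing sequence $\cdots < z_2 < y_2 < z_1 < y_1$ in $(x_0, x_0 + 1)$ with $u(y_i) > L-\eps$ and $u(z_i) < \ell + \eps$: having chosen $z_{i-1}$, pick $y_i \in (x_0, z_{i-1})$ with $u(y_i) > L - \eps$ (possible since $L$ is the limit superior of $u$ at $x_0^+$), then pick $z_i \in (x_0, y_i)$ with $u(z_i) < \ell + \eps$.

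Second, for each $N$ I would read the finite family $z_N < y_N < z_{N-1} < y_{N-1} < \cdots < z_1 < y_1$ as an admissible increasing family in the definition of $TV^s$. Each consecutive pair $(z_i, y_i)$ contributes $|u(y_i) - u(z_i)|^{1/s} \ge (L - \ell - 2\eps)^{1/s} = \eps^{1/s}$, whence the corresponding sum is at least $N\,\eps^{1/s}$. Letting $N \to \infty$ gives $TV^s u \ge N \eps^{1/s} \to +\infty$ (only $\eps>0$ and $s>0$ are used here, not the convexity of $t \mapsto t^{1/s}$), contradicting $u \in BV^s$. Hence $u(x_0^+)$ exists; the existence of $u(x_0^-)$ follows by the symmetric construction on $(x_0-\delta, x_0)$.

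The only delicate point in the write-up is the bookkeeping in the inductive construction: one must interleave the ``high'' points $y_i$ and the ``low'' points $z_i$ so that the whole finite family is monotone and therefore legitimately inserted into the supremum defining $TV^s$; everything else is immediate. I would also record the two standard consequences used later in the paper, namely that a regulated function has at most countably many discontinuities and that at each point its one-sided traces are well defined, which is what makes the Rankine--Hugoniot relations meaningful along shock curves. An alternative route would be a Jordan-type decomposition of $u$ into a difference of two monotone-like functions of finite $\Phi$-variation with $\Phi(t)=t^{1/s}$, but that is heavier machinery than the one-line oscillation estimate above requires.
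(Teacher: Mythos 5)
Your argument is correct. Note, however, that the paper itself gives no proof of this statement: it is quoted from Musielak--Orlicz \cite{MO} as a known property of spaces of generalized (here $\Phi$-) bounded variation, so there is nothing in the text to compare against. Your oscillation argument is the standard elementary proof and is sound: if a one-sided limit fails at $x_0$, the interleaved points $z_N<y_N<\cdots<z_1<y_1$ form an admissible increasing family, each pair $(z_i,y_i)$ contributes at least $\eps^{1/s}$ to the sum, and $N$ is arbitrary, so $TV^s u=+\infty$. The only point worth making explicit is that $L$ and $\ell$ are finite to begin with: this is automatic because $TV^s u<+\infty$ forces $\abs{u(x)-u(y)}\le (TV^s u)^s$ for all $x,y$, hence $u$ is bounded. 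Your closing remarks are also accurate: the same two-point estimate shows the discontinuity set is at most countable, and the existence of one-sided traces is exactly what the paper needs to make sense of the Rankine--Hugoniot conditions; the heavier Jordan-type decomposition into monotone pieces is indeed not needed for this statement.
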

 The   fractional total variation only depends on the local extrema of the function and the order of this extrema. 
\begin{lemma}\cite{BGJ6,HJ}
 If $u$  is a piecewise monotonic function and if its  local extrema  are located in the increasing sequence  $(x_i)_{i \in I}$, then  $TV^s u$ only depends on the sequence $ (u(x_i))_{i \in I}$. 
Moreover, there exists  an ordered subset $J$ of $I$ such that   
$$
TV^s u =    \sum_{j \in   J,  j \neq \max J } \vert u(x_{suc(j)})-u(x_{j}) \vert^{1/s},
$$ 
where $suc(j)$ denotes the successor of $j$ in $J$, $suc(j)=\min\{ k \in J, k>j\}$.
\end{lemma}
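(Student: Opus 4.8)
The plan is to reduce the supremum defining $TV^s u$ to a supremum over finite ordered subsequences of the extrema $(x_i)_{i\in I}$, and then to check that this supremum is attained. Set $p:=1/s\geq 1$. The sole analytic input is the superadditivity of $t\mapsto t^p$ on $[0,+\infty[$, equivalently the convexity of $y\mapsto|y-\alpha|^p+|\beta-y|^p$: this yields $|\gamma-\alpha|^p+|\beta-\gamma|^p\leq|\beta-\alpha|^p$ whenever $\alpha\leq\gamma\leq\beta$, and it forces such a convex function to attain its maximum over a segment at an endpoint.

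For the first assertion I would show that in $TV^s u=\sup\{\sum_k|u(t_{k+1})-u(t_k)|^p:\ t_1<\dots<t_n\}$ one may restrict to sequences all of whose points are local extrema of $u$. Given an admissible sequence with an interior point $t_k$ that is not a local extremum, its contribution $g(t):=|u(t)-u(t_{k-1})|^p+|u(t_{k+1})-u(t)|^p$ equals $\Phi(u(t))$ for a convex $\Phi$; as $t$ varies over $[t_{k-1},t_{k+1}]$ the value $u(t)$ stays between $\inf_{[t_{k-1},t_{k+1}]}u$ and $\sup_{[t_{k-1},t_{k+1}]}u$, so $\sup g$ over this interval is attained at a point where $u$ is maximal or minimal there. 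That point is either one of $t_{k\pm1}$ --- then $t_k$ merges with a neighbour and drops out, the bound $g(t_{k\pm1})\geq g(t_k)$ being exactly the superadditivity inequality --- or an interior one, hence a local extremum of $u$, i.e.\ some $x_i$. Moving $t_k$ there does not decrease $\sum_k|u(t_{k+1})-u(t_k)|^p$. Iterating over all interior indices, and pushing $t_1,t_n$ likewise to where $u$ is extremal on the corresponding half-line (a local extremum, or a tail value which, under the convention implicit in the statement --- e.g.\ $u$ constant near $\pm\infty$ --- is some $u(x_i)$), one reaches an extrema-supported sequence with an at least as large sum. Hence
\[
TV^s u=\sup\Big\{\sum_{j\in J,\ j\neq\max J}|u(x_{suc(j)})-u(x_j)|^{1/s}\ :\ J\ \text{a finite ordered subset of}\ I\Big\},
\]
an expression depending only on the sequence of values $(u(x_i))_{i\in I}$.

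For the second assertion one must see this supremum is attained. If $I$ is finite this is immediate --- a maximum over the finitely many ordered subsets of $I$. If $I$ is infinite, one exploits $TV^s u<+\infty$: with finite $I_N\nearrow I$ and $J_N$ a maximizer over ordered subsets of $I_N$, the values $u(x_i)$ lie in the compact interval $[-M,M]$ and the partial sums are bounded by $TV^s u$, so after extracting a subsequence along which the $J_N$ stabilize on every finite window one obtains an ordered (possibly infinite) subset $J\subseteq I$ realizing the supremum; this $J$ is the set in the statement.

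The superadditivity inequality and the combinatorial bookkeeping in the reduction are routine. The points needing care are the limiting argument in the infinite case and the precise treatment of the one-sided limits of $u$ at jumps and at $\pm\infty$ during the reduction step; in the paper this lemma is applied to piecewise constant functions with finitely many jumps, for which $I$ is finite and these subtleties do not arise.
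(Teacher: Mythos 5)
The paper offers no proof of this lemma: it is quoted from the references \cite{BGJ6,HJ}, so there is nothing internal to compare against. Your reduction step is the standard argument of those references: convexity of $y\mapsto|y-\alpha|^{p}+|\beta-y|^{p}$ with $p=1/s$ (equivalently superadditivity of $t\mapsto t^{p}$) lets you slide each sample point of an admissible partition to a local extremum of $u$, or merge it with a neighbour, without decreasing the sum, which gives that $TV^{s}u$ depends only on $(u(x_i))_{i\in I}$ and equals the supremum over finite ordered subsets $J\subseteq I$. The caveats you flag (one-sided limits at jumps, behaviour at $\pm\infty$) are real but harmless under the normalisation implicit in the statement, and in this paper the lemma is only ever invoked for piecewise constant functions with finitely many jumps, where $I$ is finite and your argument is already complete.

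The one genuine gap is the attainment claim for infinite $I$. Extracting a subsequence along which the maximizers $J_N$ stabilise on every finite window does produce a candidate $J$, but pointwise stabilisation together with $\sum_{J_N}\to TV^{s}u$ does not by itself yield $\sum_{J}=TV^{s}u$: the $J_N$ could in principle earn a fixed portion of their sums from indices drifting out of every finite window, in which case the limit $J$ would realise strictly less than the supremum. What is needed is a uniform tail bound, and it is available: given $\varepsilon>0$, choose a finite $J^{*}$ with $\sum_{J^{*}}>TV^{s}u-\varepsilon$ and set $m^{*}=\max J^{*}$; for any finite ordered $J_2\subseteq\{i\in I:\ i>m^{*}\}$, the concatenation $J^{*}\cup J_2$ is admissible and its sum is at least $\sum_{J^{*}}+\sum_{J_2}$, whence $\sum_{J_2}<\varepsilon$. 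Thus the $TV^{s}$-variation of the tail of the extrema sequence tends to $0$ (symmetrically on the left), which is exactly the tightness needed to pass to the limit window by window and conclude that your limiting $J$ attains the supremum. With that estimate inserted, the proof is complete.
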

Moreorever, it can be  dangerous to refine the mesh to compute the  fractional total variation \cite[example 2.1]{BGJ6}, \cite{Bruneau74}.     
Consider $u(x) \equiv x$ on $[0,R]$,  $p=1/s >1$. Then  $ TV^s u [0,R]= R^p$  but, 
when $n \rightarrow + \infty$, 
$$     
   \sum_{i=1}^{n} \vert u( i  R/n)-u((i-1)R/n) \vert^{p} = n (R/n)^p= R^p/ n^{p-1}
\rightarrow 0.
$$
 This property, which is not true in $BV$,  is used later to prove the existence  of weak solutions in $BV^{1/3}$ for the triangular system \eqref{eq:scl}, \eqref{eq:lin} when the flux $f$ is convex. 
\section{Main results} \label{sec:main}

The main results are stated  in the $BV^s$ framework. The basic facts on this setting were recalled in  Section \ref{sec:basic}.  
{\color{black}{The critical space for the existence theory is $BV^{1/3}$.}}
%
\begin{theorem}[Existence in $L^\infty$  with $ (u_0,v_0) \in BV^{1/3} \times L^\infty$]\label{main}
Suppose that the flux $f(\cdot)$ and the  transport velocity $a(\cdot)$ satisfy the following assumptions on 
$[-M,M]$ where  $M= \|u_0\|_\infty$:
\begin{enumerate}
\item   $f'(\cdot)$ and $a(\cdot)$  belong to $C^3([-M,M],\R)$,
\item  $f' (u)> a(u)$ and  satisfy the uniform strict hyperbolicity assumption (USH) \eqref{USH}, 
\end{enumerate}
 Then, if $(u_0,v_0)$ belongs to  $BV^{s} \times L^\infty$   and $s \geq 1/3$, there exists an entropy  solution  $(u,v)$ of the system \eqref{eq:scl}, \eqref{eq:lin},  
$  u \in  L^\infty ( [0,+\infty)_t, BV^{s}(\R_x,\R) ) , 
\quad  v \in  L^\infty ((0,+\infty)_t \times \R_x, \R). $ \\
%
In addition, the positivity of the initial data is preserved, that is, if $ \inf v_0 > 0$ then $ \inf v > 0$.
\end{theorem}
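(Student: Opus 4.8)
\noindent The strategy is a wavefront‑tracking construction for the $2\times2$ system (viewed as in \cite{BaJe,HJ} rather than triangularly), with essentially all the difficulty concentrated in a uniform $L^\infty$ bound for the transported component $v$. \emph{Approximation.} I would first replace $f$ by a piecewise‑affine flux $f_\nu$ that is locally piecewise convex or concave and converges to $f$ suitably on $[-M,M]$, so that every Riemann problem is solved by finitely many fronts (cf.\ the discussion after \eqref{Hyp:loc-finite} and \cite{D72,HR}); replace $u_0$ by a piecewise‑constant $u_0^\nu$ with $|u_0^\nu|_{BV^s}\le|u_0|_{BV^s}$ and $u_0^\nu\to u_0$ in $L^1_{loc}$; and replace $v_0$ by a piecewise‑constant $v_0^\nu\to v_0$ in $L^1_{loc}$ with $\|v_0^\nu\|_\infty\le\|v_0\|_\infty$ and $\inf v_0^\nu\ge\inf v_0$. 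At $t=0$ and at each interaction I solve the Riemann problem as in Section~\ref{sec:hyp}: a first, nonlinear (possibly composite) wave in which the $u$‑component performs exactly the elementary fronts of the scalar law \eqref{eq:scl}, followed by a linearly degenerate $2$‑contact carrying only a jump of $v$ and moving at speed $a(u)$. By the uniform strict hyperbolicity \eqref{USH} the first family is strictly faster than the second, so the scheme is well defined, a $2$‑contact is never overtaken by another $2$‑contact, and every wavefront created at an interaction is a $1$‑front produced by a $1$--$1$ collision governed by the scalar law (one checks that a $1$--$2$ collision produces no new $v$‑value).

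\emph{A priori bounds.} Since the $2$‑contacts do not alter $u$, the component $u^\nu$ is exactly the front‑tracking solution of the scalar conservation law \eqref{eq:scl} with flux $f_\nu$ and data $u_0^\nu$; hence $\|u^\nu(\cdot,t)\|_\infty\le M$ and, by the $BV^s$ theory recalled in Section~\ref{sec:basic} (\cite{CJ1,BGJ6}), $|u^\nu(\cdot,t)|_{BV^s}\le|u_0|_{BV^s}$ for all $t$, uniformly in $\nu$. The heart of the proof is the bound on $v^\nu$. Introduce the second Riemann invariant $w:=v\,g(u)$, with $g(u):=\exp\!\big(-\!\int_0^{u}\tfrac{a'}{f'-a}\big)$, which is bounded above and below on $[-M,M]$ thanks to \eqref{USH} and is constant along $2$‑characteristics in regions of constant state. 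A computation with the Rankine--Hugoniot relation for \eqref{eq:lin} and the shock speed of \eqref{eq:scl} shows that across a $1$‑front of strength $\sigma$ the quantity $w$ is multiplied by a \emph{strictly positive} factor equal to $1+O(|\sigma|^{3})$: the linear and quadratic terms cancel identically, which is precisely the cubic estimate of Section~\ref{s:Lax-curves}, valid here for the non‑convex flux $f$ and reducing to Lax's genuinely nonlinear estimate \cite{Lax57} when $f''\neq 0$; this is where $f'\in C^3$ and $a\in C^3$ are used. Now fix $(x,t)$ and follow the $2$‑characteristic $\gamma$ through it backward to $y=\gamma(0)$, chosen a point of continuity of $v_0^\nu$: $\gamma$ is a sub‑characteristic for \eqref{eq:scl}, it is crossed by each $1$‑front at most once, and the jumps of $u$ seen along $\gamma$ satisfy $\sum_k|\sigma_k|^{3}\le C\,|u_0|_{BV^s}$ --- here $s\ge 1/3$ and $|\sigma_k|\le 2M$ are used decisively, and for $s=1/3$ this sum is essentially the full fractional variation of $u_0$. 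Telescoping the factors gives $\big|\ln w(x,t)-\ln w(y,0)\big|\le C\sum_k|\sigma_k|^{3}\le C'$, hence $\|v^\nu\|_{L^\infty}\le C''\|v_0\|_\infty$ uniformly in $\nu$; since all the factors involved (including the transmission coefficient $\tfrac{\lambda_1-a(u_l)}{\lambda_1-a(u_r)}>0$ by \eqref{USH}) are positive, $\inf v^\nu\ge e^{-C'}\inf v_0>0$ whenever $\inf v_0>0$.

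\emph{Passage to the limit.} As a front‑tracking approximation of the scalar law, $u^\nu\to u$ in $C([0,T];L^1_{loc})$ and a.e., with $u$ the Kruzhkov entropy solution of \eqref{eq:scl}; lower semicontinuity of $TV^s$ under a.e.\ convergence gives $u\in L^\infty([0,+\infty)_t,BV^s(\R_x,\R))$ together with the entropy inequality for $u$. The $L^\infty$ bound gives, along a subsequence, $v^\nu\rightharpoonup v$ weak‑$\star$ in $L^\infty$; since $a(u^\nu)\to a(u)$ in $L^1_{loc}$ and boundedly, $a(u^\nu)\,v^\nu\rightharpoonup a(u)\,v$ against $C^1_c$ test functions, and together with the strong convergence of $u^\nu$ and of $v_0^\nu$ this lets one pass to the limit in both integral identities \eqref{int-identity-1}--\eqref{int-identity-2}; thus $(u,v)$ is an entropy solution, and weak‑$\star$ lower semicontinuity preserves both $\|v\|_\infty\le C''\|v_0\|_\infty$ and $v\ge\inf v_0>0$. \emph{Main obstacle.} The genuinely hard point is the uniform $v^\nu$‑bound: establishing the cubic contraction factor $1+O(|\sigma|^{3})$ along a general, possibly composite and non‑convex, $1$‑wave --- where "locally piecewise convex/concave" is used to sum the estimate over the monotone‑speed pieces of the Lax curve and where the constants must be kept uniform in $\nu$ --- and bounding the fractional variation $\sum_k|\sigma_k|^{3}$ of $u$ along the sub‑characteristic $\gamma$ by $|u_0|_{BV^s}$. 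The exponent $s=1/3$ is exactly the threshold at which a per‑shock amplification $1+O(|\sigma|^{3})$ of $v$ remains summable.
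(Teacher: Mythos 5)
Your proposal follows essentially the same route as the paper: a wave-front-tracking scheme with a piecewise-affine flux for $u$, the Riemann invariant $Z=v\exp(A(u))$ (your $w=vg(u)$ is the same object), the cubic estimate $1+\mathcal{O}([u]^3)$ on the global Rankine--Hugoniot curve valid without convexity, summation of the cubic jumps along the transverse $2$-characteristics controlled by $TV^{1/3}u_0$ since these are space-like curves, positivity from the positivity of the multiplicative factor under (USH), and passage to the limit using the linearity of the second equation in $v$. The argument and its key lemma coincide with the paper's proof, and you correctly identify the generalized cubic estimate together with the fractional-variation bound along the sub-characteristic as the crux.
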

%
 Two mandatory conditions are required to avoid singular phenomena at $t=0+$.
The strong transversality condition (USH) \eqref{USH} is the key assumption to  avoid instantaneous $\delta$-shock.  The $BV^s$ regularity of $u_0$ is the other key condition to avoid instantaneous blow-up in $L^\infty$.

{The positivity of $v$  is the key tool to get the uniform strict hyperbolicity of the (PSA) system \cite{BGJ2}. The complete proof of the positivity of $v$ for the (PSA) system  is obtained in \cite{BGJ3} without using the triangular structure of the (PSA) system \cite{BGJP}.}

Notice that the sign of $v$ is preserved a.e. by Theorem \ref{main}.  Of course, by linearity of the equation \eqref{eq:lin} with respect to $v$,  the negativity of the initial data is also preserved, that is, if $\sup v_0 < 0$ then $\sup v < 0$.
\\

The triangular approach, that means solving first  \eqref{eq:scl}  with the unique Krushkov entropy solutions \cite{K} and then  \eqref{eq:lin},  usually yields non-unique  measure solutions $v$ \cite{BJ98,PR}. 
It is the reason why the uniqueness of entropy solutions for the whole triangular system is an open problem.  Of course, we have  uniqueness for $u$ but  the problem of uniqueness remains for $v$. For a weakly coupled system with some linearly degenerate fields, the entropy condition only on the nonlinear fields is enough to ensure uniqueness in \cite{JuLo5}. For the triangular system, the coupling by the transport velocity is  too nonlinear to achieve uniqueness in the same way.
\\

For the existence result,  a  Wave Front Tracking algorithm (WFT)  is proposed  as in \cite{BaJe} for the whole system.
As a matter of fact the  approximate solution $u$  of the scalar equation satisfies the $BV^s$  uniform bounds  as in \cite{BGJ6,JR2}.
The  difficult point is to bound $v$ in $L^\infty$.  Since the system is linear with respect to $v$, an $L^\infty$ bound for $v$ is enough to get an existence theorem as in \cite{BGJP}.

Theorem \eqref{main} is optimal and, in general,  we cannot reduce the $BV^{1/3}$ regularity of $u_0$, else a blow-up can occur. For this purpose, we build a sharp  example in  a space very near to $BV^{1/3}$, namely, $ BV^{1/3-0} $
\begin{align*}
BV^{1/3} \subsetneq \quad BV^{1/3-0} := \bigcap_{s <1/3} BV^s   .  
\end{align*}  
 The  simple example  \eqref{example} from \cite{HL} with a Burgers' flux $f$ and a linear velocity $a$ is enough to provide a blow-up. 
\begin{theorem}[Blow-up in $L^\infty$ at $t=0+$ for $u_0 \in BV^{1/3-0}$]\label{blow}
For the system \eqref{example} with the  Burgers' flux $ f(u)=u^2/2$ and the velocity $a(u)=u-1$, there exists $u_0 \in BV^{1/3-0}$, $v_0 \in  L^\infty$ such that 
\begin{itemize}
\item the system \eqref{eq:scl}, \eqref{eq:lin} is uniformly strictly hyperbolic (USH) \eqref{USH}, and
\item there doesn't exist any bounded entropy solution of the system \eqref{eq:scl}, \eqref{eq:lin} with the initial data $u_{0},v_{0} $.
\end{itemize}
\end{theorem}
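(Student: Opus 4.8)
The plan is to reverse-engineer the blow-up from the explicit $\delta$-shock formula associated with the system \eqref{example}. Recall that for a single Riemann problem with a decreasing jump $u_l > u_r$ in $u$ (so Burgers produces a shock of speed $\bar u = (u_l+u_r)/2$), a $v_0\equiv 1$ datum produces a Dirac mass on $x=\bar u t$ whose strength grows like $c\,t\,(u_l-u_r)$ for a constant $c$ bounded below when $u_l-u_r$ stays bounded — more precisely the atom at time $t$ has mass $\tfrac{t}{2}\big[v_r(u_l-u_r+2) - v_l(u_r-u_l+2)\big]$, which for $v_l=v_r=1$ is $t(u_l-u_r)$. The idea is that if $u_0$ has \emph{infinitely many} downward jumps accumulating at a point $x_0$, all of comparable size but summable to the right power, then infinitely many incipient $\delta$-shocks are created at $t=0+$ in any neighbourhood of $x_0$, and their total mass in a fixed bounded window cannot be finite — so no $L^\infty$ (indeed no locally bounded-measure) solution $v$ can exist, while one still has $u_0\in BV^{1/3-0}$. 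First I would fix notation and choose explicit sequences.

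The construction: pick a sequence of disjoint intervals $I_k=(a_k,b_k)$ accumulating at, say, $x_0=0$ from the left, on which $u_0$ alternates between two levels so as to create on each $I_k$ a downward jump of height $h_k$. I want (i) $u_0$ to stay in a compact interval $[-M,M]$ with $\sup a(u) < \inf f'(u)$, i.e. USH \eqref{USH} — this is automatic for Burgers/$a=u-1$ as long as $\|u_0\|_\infty$ is kept small, say $|u_0|\le 1/4$, since then $\inf f' = -1/4 > -5/4 = \sup a$; (ii) $\sum_k h_k^{1/s} < \infty$ for every $s<1/3$, i.e. $h_k^3$ is "just barely" non-summable — the canonical choice is $h_k \sim k^{-1/3}$ up to logarithmic corrections, e.g. $h_k = (k\log^2 k)^{-1/3}$, so that $\sum h_k^{3} = \sum (k\log^2 k)^{-1} <\infty$ gives $u_0\in BV^{1/3}$... — wait, I need it \emph{not} in $BV^{1/3}$, so instead take $h_k$ with $\sum h_k^{3}=\infty$ but $\sum h_k^{1/s}<\infty$ for all $s<1/3$, e.g. $h_k = (k\log k)^{-1/3}$ for $k\ge 2$: then $\sum h_k^3 = \sum (k\log k)^{-1} = \infty$ so $u_0\notin BV^{1/3}$, while $\sum h_k^{1/s} = \sum k^{-1/(3s)}(\log k)^{-1/(3s)}<\infty$ for $1/s>3$. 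Simultaneously I must choose $v_0$ constant $\equiv 1$ (or bounded away from $0$) so each jump genuinely spawns a Dirac mass; the key quantitative input from \eqref{example} is the lower bound: the mass created by the $k$-th jump by time $t$ is $\ge c\, t\, h_k$ for a fixed $c>0$, for $t$ small enough that the $k$-th shock has not yet interacted with its neighbours.

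Then I would run the argument. Suppose, for contradiction, that a bounded entropy solution $(u,v)$ exists on $[0,T]\times\R$. The $u$-component is forced to be the Kruzhkov solution of Burgers (this uses the entropy condition on the nonlinear field and standard uniqueness), so its wave pattern near $x=0$ is known: for $t$ below some threshold $t_k>0$ the $k$-th shock is essentially isolated and moving with speed $\approx \bar u_k$. Integrating the weak formulation \eqref{int-identity-2} for $v$ against a test function $\psi$ that is $\equiv 1$ on a fixed neighbourhood $(-\delta,\delta)\times[0,t]$, one shows $\int_{(-\delta,\delta)} v(x,t)\,dx$ must absorb the sum of the atoms forced along each shock curve; by the lower bound this is $\ge c\,t\sum_{k: t<t_k} h_k$, and since $t_k\to 0$ but the partial sums $\sum_{k\le N} h_k \to \infty$ (as $\sum h_k = \sum(k\log k)^{-1/3}=\infty$), for each fixed small $t>0$ there are already infinitely many active shocks, forcing $\int_{(-\delta,\delta)}|v(x,t)|\,dx = \infty$, contradicting $v\in L^\infty$. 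I would present this via a finite truncation: keep the first $N$ jumps, get a lower bound $\ge c t \sum_{k\le N} h_k$ valid on $0<t<t_N$, then let $N\to\infty$.

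The main obstacle is the rigorous localization: I cannot literally invoke the single-Riemann-problem $\delta$-shock formula because in a genuine bounded weak solution there is \emph{no} Dirac mass (we are assuming $v\in L^\infty$), so the argument must be phrased as "the Rankine–Hugoniot / weak formulation forces a flux defect that $L^\infty$ data cannot supply". Concretely, the hard part is to prove the lower bound $\int_{x_l}^{x_r} v(x,t)\,dx \ge c\,t\,h_k + (\text{boundary flux terms})$ directly from \eqref{int-identity-2}, controlling the interaction/boundary contributions from neighbouring waves and from the non-constancy of the exact Burgers solution between shocks — i.e. showing the estimate is stable under the wavefront perturbations of the true solution, not just for the idealized Riemann data. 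A clean way is to choose the levels of $u_0$ on each $I_k$ so that between consecutive shocks $u$ is \emph{locally constant} in the Riemann fan for $t<t_k$ (this is achievable since Burgers rarefactions/shocks from piecewise-constant data are explicit), reducing each $I_k$ to an exact local Riemann problem and making the per-jump mass lower bound exact up to harmless boundary fluxes that telescope. Once that localization is secured, summing and letting $N\to\infty$ finishes the proof; checking $u_0\in BV^{1/3-0}\setminus BV^{1/3}$ and USH is then the routine bookkeeping described above.
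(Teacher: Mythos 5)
Your construction of the initial data is essentially the right one (small alternating jumps $h_k\sim k^{-1/3}$ up to logarithmic corrections, accumulating at a point, with $\sum h_k^{1/s}<\infty$ for $s<1/3$ but $\sum h_k^3=\infty$), and so is the decision to keep $\|u_0\|_\infty$ small so that (USH) \eqref{USH} holds. But the blow-up \emph{mechanism} you propose --- accumulation of $\delta$-shock masses of size $c\,t\,h_k$ per jump --- cannot work under (USH), and this is not a repairable technicality. The $\delta$-shock formula from \eqref{example} that you quote is the measure solution of \cite{HL}, which exists precisely when the jump is so large ($u_l\ge u_r+2$) that the shock speed $s=(u_l+u_r)/2$ is trapped between $a(u_r)$ and $a(u_l)$, i.e.\ exactly when (USH) fails. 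Under (USH) one has $s-a(u_\pm)>0$ for every admissible shock, so the Rankine--Hugoniot relation $s[v]=[a(u)v]$ is solved by a \emph{bounded} $v_-$ in terms of $v_+$ (equation \eqref{eq:Lax-shock-curve-}); no Dirac mass and no flux defect is created by any single jump. Consequently your test-function argument on $(-\delta,\delta)\times[0,t]$ yields only $\bigl|\int v(\cdot,t)-\int v_0\bigr|\le 2t\|a\|_\infty\|v\|_\infty$, which is consistent with bounded $v$ and produces no contradiction. Worse, your divergence criterion is $\sum_k h_k=\infty$, i.e.\ $u_0\notin BV$; since there are data in $BV^{1/3}\setminus BV$ for which Theorem \ref{main} \emph{guarantees} a bounded entropy solution, a mechanism with that scaling is provably false.

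The correct obstruction, and the one the paper uses, is multiplicative rather than additive: along a single forward $2$-characteristic (speed $a(u)<0$) launched from the accumulation point of the jumps, the Riemann invariant $Z=v\exp(A(u))$ is constant between $1$-waves and is multiplied at each shock crossing by the factor $r=\frac{s-a(u_+)}{s-a(u_-)}\exp[A(u_-)-A(u_+)]=1+\tfrac23 b^3+\mathcal{O}(b^4)>1$ of Lemma \ref{lem:flatRH} (for the stationary Burgers shocks of the construction, $\frac{1+b}{1-b}e^{-2b}$). The cubic gain per crossing is exactly what ties the threshold to $BV^{1/3}$: the characteristic crosses infinitely many shocks before any wave interaction in $u$, and $\prod_n(1+c\,b_n^3)=\infty$ because $\sum_n b_n^3=\infty$, so $Z$ (hence $v$) is unbounded at $t=0+$. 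The final step, which your plan also needs but does not supply, is to show that \emph{any} bounded entropy solution must reproduce these intermediate states --- this is the uniqueness of the Riemann problem for $(u,v)$ in the class of bounded entropy solutions up to the first interaction time (Appendix \ref{uniq-riem}), not merely the Kruzhkov uniqueness of $u$. I would encourage you to redo the contradiction step by tracking $Z$ along one $2$-characteristic and invoking the cubic Rankine--Hugoniot estimate; the rest of your setup can be kept.
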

%
Of course $u_0 \notin BV^{1/3}$, otherwise the existence theorem \ref{main}  gives a bounded entropy solution. 
There is no blow-up for $u$ since  the entropy solution $u$  of the scalar conservation law  satisfies the maximum principle. Indeed, only the function $v$  has a blow-up  at time $t=0^+$.
 Notice  also that the blow-up depends only on the regularity of $u_0$ and not of $v_0$.
 
 The example presented below follows the construction of particular explicit solutions \cite{AGV1}
 which provide optimal example in $BV^s$ for convex scalar conservation laws \cite{GGJJ}.  On any compact set avoiding the blow-up  point, the entropy solution $u$ is locally Lipschitz except on a finite number of lines.  For initial data $u_{0}$ leading to such an entropy solution $u$, we show that a uniqueness result for the  entropy solution $(u,v)$ holds at least upto the time of first interaction of waves in $u$.  Thus there is no way to avoid the blow-up. In general, a possible dense jump set in plane $(t,x)$ is possible for the entropy solution $u$ of a scalar conservation law \cite{densejump}.


Another consequence of this blow-up example is that for general nonlinear $2\times2$ strictly hyperbolic systems with a genuinely nonlinear eigenvalue and a linearly degenerate one, the existence result proven in \cite{HJ} is optimal.

\section{The  uniformly strictly hyperbolic system} \label{sec:hyp}
In this section, we study the Riemann invariants and the solution of the Riemann problem for the triangular system \eqref{eq:scl}-\eqref{eq:lin}.   

  \subsection{Riemann invariants}
  
  A $2\times 2$ strictly hyperbolic system admits, at least locally, a set of coordinates which diagonalizes the hyperbolic system for smooth solutions \cite{Sm}.
 The knowledge of this coordinate system, given by the Riemann invariants, is often useful in understanding the structure of the system. Next we study the Riemann invariants for the system \eqref{eq:scl}-\eqref{eq:lin}.
  
 The eigenvalues of the system   \eqref{eq:scl}-\eqref{eq:lin} are 
  \begin{align}\label{eigenvalues}
  \lambda_1 = f'(u)     >    a(u)= \lambda_2. 
  \end{align}
  Notice that the eigenvalues are functions of $u$ only. Let $r_{1}$ and $r_{2}$ denote the corresponding right eigenvectors.  
  %
%

Clearly, $u$ is a 2-Riemann invariant associated to the right eigenvector  $r_2= (0,1)^\top$
  and satisfies
\begin{equation}
\partial_{t} u+f'(u) \partial_{x} u=0.
\notag
\end{equation}
A 1-Riemann invariant, which we denote as z(u,v), corresponding to the right eigenvector $ r_1$, can be computed in the following manner.
We note that a right eigenvector of the matrix $D\textbf{F}(u)$ corresponding to the eigenvalue $f'(u)$ is given by 
\begin{equation}
r_1 = \begin{pmatrix}
1 \\
\frac{a'(u) v}{f'(u)-a(u)}
\end{pmatrix}.
\notag
\end{equation} 
Then $z$ satisfies 
\begin{equation}
\frac{1}{v}  \partial_{u} z
        =  \frac{a'(u)}{a(u)- f'(u) }   \partial_{v} z.
\notag
\end{equation}
This can be solved using a separation of variables.   
\begin{align*}
         \partial_{u} z=   \frac{a'(u)}{a(u)- f'(u) } ,  &  &
          \partial_{v} z=   \frac{1}{v} .
  \end{align*}
 For instance, a Riemann invariant is given by
  \begin{align*} 
  z(u,v)=   A(u) + \ln(|v|), &   &
  A'(u) =     \frac{a'(u)}{a(u) - f'(u)}.
  \end{align*}
  To avoid the singularity at $v=0$, it suffices to take the exponential    
  \begin{align} Z= \exp(z)= v \exp (A(u)),
  \end{align} 
  which satisfies the transport equation
  \begin{equation}
  \partial_{t} Z+a(u) \partial_{x} Z=0.
  \label{Z-transport}
  \end{equation}

\subsection{The Riemann problem}     \label{ss:Riemann-pb}
We study  the Riemann problem for the system \eqref{eq:scl}-\eqref{eq:lin}  with initial data:
\begin{align}
  u_0(x)=      u_\pm ,  &   \pm x > 0, \\
    v_0(x)=      v_\pm ,  &   \pm x > 0.
\end{align}

A direct and somewhat naive approach is to solve the conservation law \eqref{eq:scl} first and then  the second equation \eqref{eq:lin}  using the solution $u$.  In such an approach, one faces the difficulty  of solving the linear transport equation with a discontinuous coefficient.

Instead, we consider the two equations together as a system. This is a key point as in \cite{BaJe}.
The solution of the Riemann problem consists of two waves separated by an intermediary state 
$(u_m,v_m)$ where $u_m=u_-$ and $v_m$ is unknown. 
 In accordance with the labeling of the eigenvalues \eqref{eigenvalues}, a wave associated to $u$ is called a 1-wave and a wave associated to $Z$ is called a 2-wave. 
\begin{enumerate}
\item  A wave associated to the eigenvalue  $\lambda_1=f'$ is a shock  wave or a rarefaction wave if $f$ is convex or concave.   For a non-convex flux $f$, this wave is a composite wave.
\item       A linearly degenerate wave associated to the eigenvalue  $\lambda_2=a$ is called a contact discontinuity. The speed of this 2-wave is $a(u_-)$.
\end {enumerate}
 The intermediate value $v_m$ has to be computed through the 1-wave.   
Now, the various 1-waves that can occur are detailed.  For this purpose, we consider Riemann problems yielding only a 1-wave.

  \subsubsection*{Shock waves}
  Let us denote $U=(u,v)$ and $\tilde{U}=(u,Z) $. \\   
%
  The Rankine-Hugoniot condition gives 
  \begin{align} \label{RH}
     s[u]=  [f(u)] ,     \quad    s[v]= [a(u)\, v],
  \end{align}
  where $s$ denotes the speed of the discontinuity (or the slope of the jump).
  
 Thus, the slope of the jump is determined by $u_\pm$ and the flux $f$,
  \begin{align}\label{shock-slope}
    s =  \frac{[f(u)]}{[u]}.
  \end{align}
   Since entropy solutions are considered, the Oleinik-entropy condition \cite{Br00} for \eqref{eq:scl} enforces  the Lax-entropy conditions,
    \begin{align}\label{Oleinik}
      f'(u_-) \geq  s \geq f'(u_+),
    \end{align}
    whence  \begin{align} s \geq f'(u_+) >  a(u_+). \end{align}\\
    Using the second equation of \eqref{RH} yields  
    \begin{align}   \label{eq:Lax-shock-curve}
     (s- a(u_+)) v_+ =  (s - a(u_-)) v_- ,\\
    \label{eq:Lax-shock-curve-} 
         v_+ =         v_-  \frac{ s    - a(u_-)}{ s - a(u_+)}:= \mathcal{S}_-(u_+;U_-).
    \end{align}
   The equation \eqref{eq:Lax-shock-curve-} can be interpreted in terms of the Lax shock curve. For a fixed $U_{-}=(u_-,v_-)$, the right hand side of  \eqref{eq:Lax-shock-curve-}  is only a function of $u_+$ as $s$ is given by \eqref{shock-slope} as  a function of $u_+$.  In the plane $U=(u,v)$,  \eqref{eq:Lax-shock-curve-} describes the set of $U_+$   such that  the Riemann problem with initial data $U_\pm$  is solved by a shock.   On this curve, only $U_+$ satisfying the Oleinik condition  \eqref{Oleinik} are considered to allow an entropic shock. 
  
   Conversely, if $U_+$ is fixed, the Lax shock curve is parametrized by $u_-$ and reads
    \begin{align}\label{eq:Lax-shock-curve+} 
         v_- =       v_+    \frac{ s    - a(u_+)}{ s - a(u_-)}:= \mathcal{S}_+(u_-;U_+).
    \end{align}

Proceeding similarly with  $Z=v\ \exp(A(u))$, $\tilde{U}_{-}=(u_-,Z_-)$
 and keeping the notation $\mathcal{S}$ in coordinates $(u,Z)$ yields,
\begin{equation}
Z_{+}=Z_{-} \frac{s-a(u_{-})}{s-a(u_{+})} \exp \left[A(u_{+})-A(u_{-}) \right]:= \mathcal{S}_{-}(u_+;\tilde{U}_{-}),
\notag
\end{equation}
and
\begin{equation}
Z_{-}=Z_{+} \frac{s-a(u_{+})}{s-a(u_{-})} \exp \left[A(u_{-})-A(u_{+}) \right]:= \mathcal{S}_{+}(u_-;\tilde{U}_{+}).
\notag
\end{equation}

    \subsubsection*{1-Contact discontinuity}
  
    This is a limiting case of the preceding one, when $f'$ is constant on the interval $[u_-,u_+]$.   The same formula for the shock wave follows.
    \begin{align}
     s= f'(u_\pm)=  f'(u) >  a(u_\pm), \quad   u \in [u_-,u_+] , \\
      v_+ =   v_{-}        \frac{ f'(u_-)    - a(u_-)}{ f'(u_+) - a(u_+)}= \mathcal{S}_-(u_+;U_-),\\
      Z_{+}= Z_{-} \frac{f'(u_{-})-a(u_{-})}{f'(u_{+})-a(u_{+})} \exp \left[A(u_{+})-A(u_{-}) \right]:= \mathcal{S}_{-}(u_+;\tilde{U}_{-}).
    \end{align}  
    
    Such waves arise in the wave-front tracking (WFT) method when the flux is approximated by piecewise linear functions. \\
  
 \subsubsection*{Rarefaction waves}
  Since $Z$ is a 1-Riemann invariant, the Lax rarefaction curve $\mathcal{R}_{-}(\tilde{U}_{-}) $ is simply, 
  \begin{align}
   Z_+ =  Z_-.
  \end{align}
  Using the defiinition of $Z$, this implies $v_+ \exp(A(u_+))= v_- \exp(A(u_-)) $ and thus the  Lax rarefaction curve can be written explicitly,
  \begin{align}
   v_+ =  \mathcal{R}_-(u_+;U_-) = v_- \exp(  A(u_-)- A(u_+)), \\
      v_- =  \mathcal{R}_+(u_-;U_+) = v_+ \exp(  A(u_+)- A(u_-)).
  \end{align}
  Notice that $v_+$ and $v_-$  have the same signs. In particular, if $v_-=0$, $v=0$ is constant through the rarefaction wave.\\

   \subsubsection*{1-Composite waves}
   The composite waves only occur for the waves associated to the first eigenvalue $f'(u)$. 
  In general,  the entropy solution $u$ is  a  composite wave \cite{HR}. If $f$ has a finite number $N$ of inflection points, then there are at most $N$ contact-shock waves \cite{D16,LF02}. The Lax curves associated to such waves are studied below in Section \ref{s:Lax-curves}.

   \subsubsection*{2-Contact discontinuity}
     $u$ is a 2-Riemann invariant and hence is constant along the 2-contact discontinuity.  Thus  the Lax curve is simply a vertical line in the plane $U=(u,v)$ or the plane $\tilde{U}=(u,Z) $,
     \begin{align}\label{2-Lax-curve}
          u_-=u_+   .       
     \end{align}

\section{The Lax curves}  \label{s:Lax-curves}

A fundamental theorem due to Lax \cite{Lax57} states that the shock curve and the rarefaction curve emanating from a constant state $U_-$  in the plane $(u,Z)$ or $(u,v)$ have a contact of the second order for a genuinely nonlinear eigenvalue.  This means that the shock curve can be replaced by the rarefaction  curve up to an error of  order $[u]^3$  where $[u]=u_+-u_-$ \cite{Br00,D16,Serre}.   
For the triangular system, a genuinely nonlinear eigenvalue  means  $f'' > 0 $   (or  $f'' <0 $) everywhere.  For nonconvex cases, typically $f''$ locally has a finite number of roots where $f''$ changes its sign and the Lax curves are less regular due to the occurrence of contact-shocks.  
Under a concave-convex assymption, which means here that $f'''$ does not vanish, the regularity of the Lax curves is only piecewise $C^2$ \cite{LF02}, see also \cite{AM04}. As a consequence, the error becomes of order $[u]^2$ for the variation of $Z$ through a contact-shock \cite{LF02}. The situation is worse in general, the Lax curves are only Lipschitz \cite{B02}.
  However, we prove that  cubic estimates are still valid for the triangular system \eqref{eq:scl}, \eqref{eq:lin}. It is  mainly due to the existence of Riemann invariant coordinates.

In this section, cubic estimates on the Lax curves for the triangular system in the plane  $(u,Z)$ are proved.

Let  $(u_-,v_-)$, $(u_+,v_+)$ be two constant states connected by a rarefaction or a shock wave. 
It is more convenient  to use the Riemann invariant coordinates $(u,Z)$.  
\\
The rarefaction curves in the plane $(u,Z)$ are simply,
\begin{align}
     Z_-=Z_+ ,
\end{align}
which means that in the  $(u,v)$ plane 
\begin{align}
     v_- \exp(A(u_-))=v_+ \exp(A(u_+)) .
\end{align}
For the shock curve, the Rankine-Hugoniot condition is written in the conservative variables $(u,v)$, 
 \begin{align}
    s  & = \frac{[f(u)]}{[u]} =   \frac{f(u_+)- f(u_-)}{u_{+}-u_{-}},   \\
   v_- ( s - a(u_-))   & =    v_+ (s - a(u_+)).
 \end{align}
 Since $f \in C^4$, $s$ is a $C^3$ function of its arguments.  Moreover, fixing  $(u_+,v_+)$ and considering $u_-$ as a variable, the Lax  shock curve is $C^3$ with respect to $u_-$, 
 \begin{align} 
   v_-  & =    v_+  \frac{s - a(u_+)}  { s - a(u_-)}.
 \end{align}
 Indeed,  the denominator never vanishes due to the uniformly strict hyperbolicity assumption (USH) \eqref{USH}.
 The same regularity of the shock curve holds  in the variables $(u,Z)$
  \begin{align} 
   Z_-  & =    Z_+  \frac{s - a(u_+)}  { s - a(u_-)} \exp( A(u_-)-A(u_+)).
 \end{align}
Of course, the Lax rarefaction curve and the Lax shock curve has to be restricted on the subset satisfying entropy conditions. Nevertheless, we  use these curves for all range of $u_-$ in $\R$  (at least for $-M \leq u_- \leq M$) to obtain a generalized Lax cubic estimate for the nonconvex case.

%
  \subsection{The Lax cubic estimate on the Rankine-Hugoniot curve} \label{ss:RH-cube}
   The Lax cubic estimate \cite{Lax57} can be written as follows for a shock  wave connecting 
    $(u_-,Z_-)$ to $(u_+,Z_+)$ for the triangular system \eqref{eq:scl},\eqref{eq:lin}, as soon as $Z$ is bounded,  
   \begin{align}
     [Z]= \mathcal{O}([u])^3,   && [Z]=Z_+ -Z_-, &&  [u]= u_+-u_-.
   \end{align}
  The Riemann invariant $Z$ is constant along the rarefaction curves. The Lax cubic estimate means that the shock curve and the rarefaction curve have a contact of order 2. 
  The Lax cubic estimate was written in a genuinely nonlinear framework \cite{Lax57}. This means that $f''$ does not vanish.  Indeed, the Lax computations are still valid without this convex assumption and without only considering the entropic part of the Rankine-Hugoniot curve. Here, we also use the non-entropic part of the Rankine-Hugoniot curve.  It is a useful tool  used many times in this paper, first in the next section  \ref{ss:cubicRp} to get a cubic estimate for  the  entropy solution of the Riemann problem for  the triangular system with a non-convex flux $f$.

   For the triangular system, the Rankine-Hugoniot curve is global, explicit and well defined thanks to the uniformly strict hyperbolicity assumption \eqref{USH}. For $2\times2$ systems, such a global curve does not always exist \cite{KK90}.
   
   Now, to prove cubic estimates, we have to write the Rankine-Hugoniot curve.
    Here, we choose  to  write  $Z_-$ as a  function of $u_-$  when $(u_+,Z_+)$ are fixed  for the following reasons.
  \begin{enumerate}
  \item  To solve the Riemann problem and compute the intermediary state $v_m$ which corresponds to $Z_m$ and here $Z_-$.
  \item  To obtain the cubic estimate on the global Rankine-Hugoniot curve  below.
  \item  To obtain the existence result, bounding $Z$ along the 2-characteristics.
  \item   To build a blow-up, again computing $Z$ from the right to the left on the 2-characteristics.
  \end{enumerate}
 The Rankine-Hugoniot curve   $\mathcal{RH}_{+}$ when $\tilde{U}_{+}=(u_+,Z_+)$ is fixed and $Z_-$ is a function of $u_-$, is given by
 \begin{equation}\label{def:RH}
Z_{-}=Z_{+} \frac{s-a(u_{+})}{s-a(u_{-})} \exp \left[A(u_{-})-A(u_{+}) \right]
    =     Z_+   r(u_-,u_+) := \mathcal{RH}_{+}(u_-; u_+, Z_+ ) . 
\end{equation}
Note that the Rankine-Hugoniot curve $\mathcal{RH}_{+} $ is a property of $(u,v)$, which means this curve is always computed in the conservative variables $(u,v)$ \cite{D16}. For our convenience, here we write it in terms of $(u,Z)$ due to the fundamental role of the line $Z=constant$ in the Lax cubic estimate \cite{Lax57}.
 This classic  cubic estimate on the shock curve is generalized on the global Rankine-Hugoniot curve. 
 \begin{lemma}[Cubic flatness of the global Rankine-Hugoniot curve]\label{lem:flatRH}
 If $f'$ and $a$ belong to $C^3(\R,\R)$ and satisfy the uniform strict hyperbolicity condition \eqref{USH}, then
 \begin{align}
  \notag
  s &= s(u_-,u_+)= \frac{[f]}{[u]}= \frac{f(u_+)-f(u_-)}{u_+-u_-}  \qquad \in C^3([-M,M]^2,\R) ,\\
  \notag 
  r&= r(u_-,u_+) =  \frac{s-a(u_{+})}{s-a(u_{-})} \exp \left[A(u_{-})-A(u_{+}) \right] \qquad \in C^3([-M,M]^2,\R),\\
   r&  =   1  + \mathcal{O}(1) \, [u]^3 >0, \quad \forall (u_-,u_+) \in [-M,M]^2,   \label{eq:flatr}\\
   Z_- &  
   =   Z_+  ( 1 +  \mathcal{O}(1)  [u]^3 ),
      \quad \forall (u_-,u_+,Z_+) \in [-M,M]^2 \times \R ,
      \label{eq:flatRH}
   \end{align}
where the constant $\mathcal{O}(1)$ depends only on the derivatives of $f'$ and $a$ on $[-M,M]$.
Moreover, $Z_-$ has the same sign as $Z_+$, more precisely, 
\begin{align*}
 Z_+=0  & \Rightarrow Z_-=0, \\  Z_+ \neq 0  &\Rightarrow Z_- Z_+> 0. 
 \end{align*} 
 \end{lemma}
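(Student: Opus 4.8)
The plan is to establish the statement in two stages: first the $C^3$ regularity and strict positivity of the divided difference $s$ and of $r$, and then the cubic flatness $r=1+\mathcal{O}(1)\,[u]^3$, from which \eqref{eq:flatRH} and the sign claim follow at once.

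For the regularity, I would write the divided difference in integral form,
\[
s(u_-,u_+)=\int_0^1 f'\big((1-t)u_-+t\,u_+\big)\,dt ,
\]
which shows $s\in C^3([-M,M]^2,\R)$ since $f'\in C^3$, and which supplies the diagonal values $s|_{u_-=u_+=u}=f'(u)$, $\partial_{u_-}s|_{\mathrm{diag}}=\tfrac12 f''(u)$, $\partial^2_{u_-}s|_{\mathrm{diag}}=\tfrac13 f'''(u)$ (and their $u_+$-analogues) needed later. The mean value theorem gives $s(u_-,u_+)=f'(\xi)$ for some $\xi$ between $u_-$ and $u_+$, so on $[-M,M]^2$ one has $s\geq\inf_{[-M,M]}f'>\sup_{[-M,M]}a\geq a(u_\pm)$ by (USH) \eqref{USH}; hence $s-a(u_-)$ and $s-a(u_+)$ stay above a fixed $\delta>0$. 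Together with $A\in C^3$ (because $A'=a'/(a-f')\in C^2$, the denominator not vanishing by strict hyperbolicity), this makes $r=\frac{s-a(u_+)}{s-a(u_-)}\exp[A(u_-)-A(u_+)]$ a composition and quotient of $C^3$ functions with denominator bounded away from zero, so $r\in C^3([-M,M]^2,\R)$ and $r>0$ everywhere on the square.

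For the cubic flatness, I would set $g:=\ln r=\ln(s-a(u_+))-\ln(s-a(u_-))+A(u_-)-A(u_+)\in C^3$ and exploit its antisymmetry $g(u_-,u_+)=-g(u_+,u_-)$ — equivalently $r(u_-,u_+)\,r(u_+,u_-)=1$, which merely says $Z$ returns to itself when traversed back along the same Rankine--Hugoniot curve. Antisymmetry already forces $g$ and its mixed second derivative to vanish on the diagonal and gives $\partial_{u_-}g|_{\mathrm{diag}}=-\partial_{u_+}g|_{\mathrm{diag}}$, $\partial^2_{u_-}g|_{\mathrm{diag}}=-\partial^2_{u_+}g|_{\mathrm{diag}}$, so the entire second-order Taylor jet of $g$ along the diagonal vanishes as soon as one verifies the two scalar identities $\partial_{u_-}g|_{\mathrm{diag}}=0$ and $\partial^2_{u_-}g|_{\mathrm{diag}}=0$. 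The first is immediate: in
\[
\partial_{u_-}g=\frac{\partial_{u_-}s}{s-a(u_+)}-\frac{\partial_{u_-}s-a'(u_-)}{s-a(u_-)}+A'(u_-),
\]
the diagonal substitution $s=f'$, $\partial_{u_-}s=\tfrac12 f''$ collapses the first two terms to $a'/(f'-a)=-A'$. The second identity, $\partial^2_{u_-}g|_{\mathrm{diag}}=0$, is the classical Lax cancellation \cite{Lax57} that raises the contact of the shock and rarefaction curves from first to second order, and this is the step I expect to be the main obstacle: I would differentiate the displayed expression once more and substitute the diagonal values $s=f'$, $\partial_{u_-}s=\tfrac12 f''$, $\partial^2_{u_-}s=\tfrac13 f'''$, $A'=-a'/(f'-a)$, $A''=(a'/(a-f'))'$, and check that everything cancels; the key point is that this algebra never uses $f''\neq0$, so it applies equally on the non-entropic branch of the curve and over the whole of $[-M,M]^2$. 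An equivalent, possibly cleaner, bookkeeping is to compare directly the shock curve $u_-\mapsto Z_+\,r(u_-,u_+)$ and the rarefaction curve $u_-\mapsto Z_+$ issued from the fixed state $(u_+,Z_+)$ and check agreement up to second order at $u_-=u_+$. Once the $2$-jet vanishes, Taylor's formula with integral remainder along $t\mapsto g(u_++t[u_--u_+],u_+)$ and the uniform bound on $\partial^3_{u_-}g$ over the compact square give $g=\mathcal{O}(1)\,[u]^3$ with a constant depending only on the derivatives of $f'$ and $a$ on $[-M,M]$. Then $r=e^g=1+\mathcal{O}(1)\,[u]^3>0$, $Z_-=Z_+\,r=Z_+(1+\mathcal{O}(1)\,[u]^3)$, and, since $r>0$, $Z_-$ has the sign of $Z_+$ with $Z_-=0$ exactly when $Z_+=0$.
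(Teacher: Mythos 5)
Your route is sound and, at bottom, the same as the paper's: both proofs reduce the cubic estimate to showing that the $2$-jet of $r$ (equivalently of $g=\ln r$) at the diagonal $u_-=u_+$ is trivial, the paper doing this by multiplying the second-order Taylor expansions of the fraction $\frac{s-a_+}{s-a_-}$ and of $\exp[A_--A_+]$ in the single variable $[u]$ with $u_+$ fixed. Your two organizational additions are correct and slightly cleaner: the integral representation $s=\int_0^1 f'((1-t)u_-+tu_+)\,dt$ gives the $C^3$ regularity and the diagonal derivatives in one stroke, and the antisymmetry $r(u_-,u_+)\,r(u_+,u_-)=1$ disposes of the $u_+$- and mixed derivatives, a symmetry the paper only alludes to (via Serre) without exploiting in its proof. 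The one real issue is that the step you yourself flag as the main obstacle, $\partial^2_{u_-}g\big|_{\mathrm{diag}}=0$, is asserted rather than carried out, and that identity \emph{is} the content of the lemma; as written the argument is incomplete precisely there. The cancellation does close: with $h=f'-a$, the $[u]^2$ coefficients of the two factors in the paper's expansions \eqref{eq:frac1} and \eqref{eq:exp1} sum to $a'^2/h^2$, which exactly cancels the cross term $\left(-\tfrac{a'}{h}\right)\left(+\tfrac{a'}{h}\right)$ coming from the first-order coefficients, so the $[u]^2$ coefficient of $r$ (hence of $g$, since the $[u]^1$ term already vanishes) is
\begin{align*}
\frac{-a'f''+a'h'+a'^2}{2h^2}=0, \qquad\text{because } h'=f''-a'.
\end{align*}
You should either perform this differentiation explicitly in your framework or import the paper's two expansions; note, as you correctly observe, that nothing in this algebra uses $f''\neq 0$, which is what makes the estimate valid on the whole (non-entropic included) Rankine--Hugoniot locus. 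Everything else --- the uniform lower bound $s-a(u_\pm)\geq\delta>0$ from (USH) via the mean value theorem, the Taylor remainder with a constant controlled by $\sup|\partial^3_{u_-}g|$ on the compact square, and the sign conclusion from $r>0$ --- matches the paper and is fine.
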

  The cubic flatness of the Rankine-Hugoniot curve $\mathcal{RH}_+$ is expressed in \eqref{eq:flatRH}.   It does not depend on the convexity of the fields, although in the classical textbooks \cite{Br00,D16,HR,LF02,Serre,Sm} some nonlinearity assumptions on the fields are given. The reason of these nonlinear assumptions is usually to introduce the rarefaction curve and the shock curve. Here,  the global Rankine-Hugoniot curve defined for all $u_- \in [-M,M]$ is the main subject without looking at the entropic parts of this curve.  A careful reading of the classical proof of the cubic estimate in textbooks shows that it is a geometric property of the Rankine-Hugoniot curve itself, without referring at the entropic or nonlinearity  assumptions.
  This geometric property is a consequence of the symmetry of the Rankine-Hugoniot condition with respect to $U_-$ and $U_+$  as explained in \cite{Serre}. It is very important in this paper to prove the cubic estimate for the non-entropic part of the Rankine-Hugoniot  curve for two reasons.
  \begin{enumerate}
   \item  The cubic estimates on the Lax curve (which is not piecewise $C^3$ for nonconvex $f$ \cite{LF02}) uses the global Rankine-Hugoniot curve, as discussed in the next section \ref{ss:cubicRp}.
   \item   Rarefaction wave fans are replaced by weak non-entropic jumps in the wave front tracking algorithm \cite{Br00}. The error in the weak formulation is controlled by the cubic estimate to pass to the limit and get a weak solution of \eqref{eq:scl}, \eqref{eq:lin}.   
  \end{enumerate}
 The proof appears  as a  direct consequence of \eqref{eq:flatr}.  An elementary and self-contained proof using only Taylor's expansions is proposed.  A more tedious computation can give the more precise result 
 \begin{align}
    r(u_-,u_+) & =    1  +    E  [u]^3 +  \mathcal{O}([u]^3),
 \end{align}
 where $E$ depends in a quite complicated way on the derivatives of $f'$ and $a$ at $u=u_+$. 
 The computation of $E$ is quite intricate and not useful here, except in the last section \ref{sec:boum} on the blow-up where a direct  computation of $E$ at $u_+=0$ is given when $f$ is quadratic and $a$ is linear. 
 
 Now, Lemma \ref{lem:flatRH} is proven.
 \begin{proof}   
 The positivity on $r$ is a consequence of the assumption \eqref{USH}.  This positivity implies that $Z_-$ has the same sign as $Z_+$.

To obtain \eqref{eq:flatr}, we use Taylor expansion. Here $u_+$ is  fixed and $u_-$  is the variable near $u_+$. 
 The notations $a_-=a(u_-)$, $a_+=a(u_+)$ and so on    are used to shorten the expressions.
 \begin{align*}
  a_- &= a_+ -a'_+[u] +  (a''_+/2) [u]^2  +  \mathcal{O}([u]^3),\\
   s & = \frac{[f]}{[u]} = \frac{f_- - f_+}{-[u]}=   f'_+ - (f''_+/2) [u]  + ( f'''_+/6) [u]^2+ \mathcal{O}([u]^3).
 \end{align*}
 The hyperbolic quantity $h$ is used,
  \begin{align} \label{eq:h}h:=f'-a >0.
  \end{align} 
The sign of $h$ is the consequence of   the strict hyperbolicity assumption \eqref{USH}. 
We start with the fraction part of $r$  and $h_+=f'_+ -a_+ \neq  0$.
\begin{align}
 \frac{s-a_{+}}{s-a_{-}}   &= 
 \frac{   (f'_+ -  a_+) - (f''_+/2) [u]  + (f'''_+/6) [u]^2      }
 { (f'_+ -a_+) - ( f''_+/2 - a'_+)  [u]  +  (f'''_+/6 - a''_+/2) [u]^2  }  + \mathcal{O}([u]^3)  \\
 \label{eq:frac1}
  &= 1  - \frac{a'_+}{h_+} [u]  + \frac{  h_+ a''_+  - a'_+ f''_++ 2 a'^2_+ }{2h^2_+}    [u]^2    +   \mathcal{O}([u]^3) . 
\end{align}
 
 For the term $\exp(-[A])$ in $r$, the Taylor expansion of $A'_-$ is used at the first order.
 \begin{align*}
     - A'_- & =  \frac{a'_-}{h_-} 
     = \frac{a'_+ - a''_+[u] }{h_+ - h'_+[u] }+  \mathcal{O}([u]^2) \\
       &=   \frac{a'_+}{h_+}  + \frac{ a'_+ h'_+ - a''_+ h_+ }{h^2_+}[u]  
       + \mathcal{O}([u]^2). 
      \end{align*}
      Integrating  with respect to $u_-$ yields, 
      \begin{align*}
   A_- -A_+&   =   \frac{a'_+}{h_+}[u]  
      + \frac{ a'_+ h'_+- a''_+ h_+  }{2 h^2_+}[u]^2 
           + \mathcal{O}([u]^3) .
 \end{align*}
  Since  $\exp(x)= 1 + x+ x^2/2 +  \mathcal{O}(x)^3 $, it yields
 \begin{align} \label{eq:exp1}
  \exp \left[A_{-}-A_{+} \right] &=  1 
   + \frac{a'_+}{h_+}[u]
      + \frac{   a'_+ h'_+ - a''_+ h_+  + a'^2_+   }{2 h^2_+}[u]^2     + \mathcal{O}([u]^3) .
 \end{align}
 Now, multiplying     \eqref{eq:frac1}  and \eqref{eq:exp1} yields  $r= 1 + \mathcal{O}([u]^3)$.
 \end{proof}

\subsection{Cubic estimates for the Riemann problem} \label{ss:cubicRp}

Now,  the intermediary state $Z_m$ of a Riemann problem has to be estimated.  For this purpose,
 the variation of $Z$  along a composite wave is studied.  When the flux is convex,  Lax proved that the variation of $Z$ is  a cubic order of the variation of $u$ \cite{Lax57}.  For a non-convex flux, it is well known that the Lax curve is less regular, piecewise $C^2$ \cite{LF02} or only Lipschitz \cite{B02}.  However, we prove that for our triangular system we are able to keep a cubic order.  This is mainly due to the existence of Riemann coordinates for $2\times 2$ systems and the cubic estimate for  the  global Rankine-Hugoniot  locus, Lemma \ref{lem:flatRH}.
As a consequence, we can prove a similar estimate for the variation of $Z$ over a composite 1-wave.  This improves the well known square root estimate  for concave-convex  eigenvalues \cite{LF02}, which correspond  to cubic degeneracies for $f$.  That means that for the triangular system \eqref{eq:scl}, \eqref{eq:lin}, the estimate is as precise as for  the convex case \cite{Lax57}.
\begin{proposition}[Variation of $Z$ through a composite wave]\label{pro:cub}
\alali
Assume that the flux has $N_{infl}$ points. \\
Let the states $\tilde{U}_{i}=(u_{i},Z_{i}),\ i=1,\dots,m$, where $u_{0}<u_{1}<\dots<u_{m} \ (m\leq N_{infl}+1) $ comprise a composite 1-wave, and 
$Z_-=Z_0$ and $Z_+=Z_m$. Then the total variation of $Z$ through a 1-wave is, 
\begin{align}
 \| Z \|_\infty  \leq   \vert Z_{+} \vert   \exp\left( \mathcal{O}(  \vert u_{+}-u_{-} \vert^3) \right) ,
 \label{ ineq: Z-bound}
 \\
 TV Z  \leq   \mathcal{O}(1)  |Z_+|  \vert  u_{+}-u_{-} \vert^{3}    .
 \label{ineq:Z-TV}
\end{align} 
\end{proposition}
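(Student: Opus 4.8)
The plan is to reduce the estimate on the whole composite 1-wave to a telescoping sum over its elementary pieces, each of which is either a rarefaction fan or a (possibly non-entropic) shock connecting adjacent states $\tilde U_{i-1}=(u_{i-1},Z_{i-1})$ to $\tilde U_i=(u_i,Z_i)$ along the global Rankine-Hugoniot / rarefaction locus. On each rarefaction piece $Z$ is constant, so $Z_{i-1}=Z_i$ and there is nothing to estimate. On each shock piece, Lemma \ref{lem:flatRH} applied with $u_+$ replaced by $u_i$, $u_-$ replaced by $u_{i-1}$, and $Z_+$ replaced by $Z_i$ gives
\begin{equation}
Z_{i-1}=Z_i\,\bigl(1+\mathcal{O}(1)\,[u_i-u_{i-1}]^3\bigr),\notag
\end{equation}
with the $\mathcal{O}(1)$ constant depending only on the $C^3$-norms of $f'$ and $a$ on $[-M,M]$, and moreover $Z_{i-1}$ has the same sign as $Z_i$. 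This is the core input; the rest is bookkeeping.

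Next I would iterate this from $i=m$ down to $i=1$. Writing $r_i:=r(u_{i-1},u_i)$ (set $r_i=1$ on rarefaction pieces), we get $Z_0=Z_m\prod_{i=1}^m r_i$ and, more usefully, for every intermediate index $k$, $Z_k=Z_m\prod_{i=k+1}^m r_i$. Taking absolute values, using $|r_i-1|\le C\,|u_i-u_{i-1}|^3\le C\,|u_i-u_{i-1}|\,|u_+-u_-|^2$ and the elementary bound $\prod(1+\alpha_i)\le\exp(\sum\alpha_i)$, together with $\sum_{i=1}^m|u_i-u_{i-1}|=|u_+-u_-|$ (the $u_i$ are monotone increasing between $u_-$ and $u_+$), I get
\begin{equation}
|Z_k|\le |Z_+|\exp\Bigl(C\sum_{i=1}^m|u_i-u_{i-1}|^3\Bigr)\le |Z_+|\exp\bigl(C\,|u_+-u_-|^3\bigr),\notag
\end{equation}
which is \eqref{ ineq: Z-bound} once one notes that on a single wave $Z$ takes only the finitely many values $Z_0,\dots,Z_m$, so $\|Z\|_\infty=\max_k|Z_k|$.

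For the total variation estimate \eqref{ineq:Z-TV}, since $Z$ is piecewise constant along the wave with values $Z_0,\dots,Z_m$, one has $TV\,Z=\sum_{i=1}^m|Z_i-Z_{i-1}|=\sum_{i=1}^m|Z_i|\,|r_i-1|$. Bounding $|Z_i|\le |Z_+|\exp(C|u_+-u_-|^3)\le \mathcal{O}(1)|Z_+|$ (the exponential is bounded since $|u_+-u_-|\le 2M$) and $|r_i-1|\le C|u_i-u_{i-1}|^3\le C|u_i-u_{i-1}|\,|u_+-u_-|^2$, and summing, gives
\begin{equation}
TV\,Z\le \mathcal{O}(1)\,|Z_+|\,|u_+-u_-|^2\sum_{i=1}^m|u_i-u_{i-1}|=\mathcal{O}(1)\,|Z_+|\,|u_+-u_-|^3.\notag
\end{equation}
The main obstacle, and the only genuinely delicate point, is making precise that the composite wave really decomposes into at most $N_{infl}+1$ alternating rarefaction and admissible shock pieces whose states lie on the global Rankine-Hugoniot curve to which Lemma \ref{lem:flatRH} applies — i.e. that one may legitimately chain the lemma's local cubic estimates across the contact-shocks where the classical Lax curve loses regularity. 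This is exactly what the ``global, explicit, well-defined'' Rankine-Hugoniot curve from Section \ref{ss:RH-cube} buys us: the non-entropic part of the curve is still $C^3$ and the cubic flatness holds there too, so no regularity of the composite Lax curve itself is ever needed; one only needs monotonicity of the $u_i$ and the additivity $\sum|u_i-u_{i-1}|=|u_+-u_-|$. I would also remark that the sign statement of Lemma \ref{lem:flatRH} propagates: each $r_i>0$, hence all $Z_i$ share the sign of $Z_+$, which is what makes the $L^\infty$ and $TV$ bounds clean and is needed later for preserving positivity of $v$.
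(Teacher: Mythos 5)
Your proposal is correct and follows essentially the same route as the paper's own proof: decompose the composite wave into rarefaction pieces (where $Z$ is constant) and jumps, apply the cubic flatness of the global Rankine--Hugoniot curve from Lemma \ref{lem:flatRH} on each jump, use the monotone ordering of the $u_i$ to sum the cubic increments into $\mathcal{O}(\vert u_+-u_-\vert^3)$, and conclude via $1+x\le\exp(x)$ for the $L^\infty$ bound and a telescoping sum for the total variation. The only difference is cosmetic: you make explicit the step $\sum_i\vert u_i-u_{i-1}\vert^3\le\vert u_+-u_-\vert^3$ that the paper leaves implicit in the phrase ``noting that $u_0,\dots,u_m$ are ordered.''
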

Later, as the consequence of the existence proof, the assumption of the finite number of inflection points can be removed and the same inequalities \eqref{ ineq: Z-bound}, \eqref{ineq:Z-TV} hold.
\begin{proof}
We note that $Z$ is a 1-Riemann invariant and therefore it remains constant over a rarefaction wave. Moreover, if the states $\tilde{U}_{i} $ and $\tilde{U}_{i+1} $ are joined by a jump, by Lemma \ref{lem:flatRH} and the classical inequality $1+x \leq \exp(x)$, we have the estimate 
\[   \vert Z_{i} \vert
  =  \vert Z_{i+1}  r( u_{i},u_{i+1} )\vert=  \vert Z_{i+1} \vert   \left(1 +  \mathcal{O}(  \vert u_{i}-u_{i+1} \vert^3) \right) 
\leq   \vert Z_{i+1} \vert   \exp\left( \mathcal{O}( \vert u_{i}-u_{i+1} \vert^3 ) \right)  .\]
Summing up and noting that $u_{0}, u_{1},\dots,u_{m} $ are ordered, the estimate \eqref{ ineq: Z-bound} follows,
\[   \max_i\vert Z_{i} \vert
\leq   \vert Z_{+} \vert   \exp\left( \mathcal{O}( \vert u_{+}-u_{-} \vert^3 ) \right) .\]
Now, the $BV$ bound for $Z$ through the composite wave is computed. 
\begin{align*}
  Z_{i}-Z_{i+1} =  Z_{i+1}  (r( u_{i},u_{i+1} ) - 1)=  Z_{i+1} \mathcal{O}(  \vert u_{i}-u_{i+1} \vert^3), 
\\
 TV  Z \leq    \max_i\vert Z_{i} \vert    \sum_i  \mathcal{O}( \vert u_{i}-u_{i+1} \vert^3 ) \leq 
 |Z_{+}| \mathcal{O}( \vert u_{+}-u_{-} \vert^3 ) \exp\left( \mathcal{O}(  \vert u_{+}-u_{-} \vert^3 ) \right)  .
\end{align*}
Since $u$ satisfies the maximum principle, we further have $\exp\left( \mathcal{O}( \vert u_{+}-u_{-} \vert^3) \right)= \mathcal{O}(1) $ which only depends on the $L^\infty$ bound of the initial data $u_{0}$. Hence the  inequality \eqref{ineq:Z-TV} follows.
\end{proof}

\section{Existence in $BV^{s}$}   \label{sec:exist} 
In this section, Theorem \ref{main} is proved using a simplified Wave Front Tracking (WFT) \cite{Br00,HR} algorithm for such a triangular system \eqref{eq:scl}, \eqref{eq:lin}.
The  $BV^s$ estimates  for $u$ are a consequence of such estimates for scalar conservation laws \cite{BGJ6, JR1, JR2}.   The $L^\infty$ bound for $v$ and the proof of existence of a weak solution for the triangular system are based on an approach using the $BV^{1/3}$ regularity for $u$. 

\subsection{The Wave Front Tracking algorithm}
The WFT depends on an integer parameter $\nu >0$.  The approximate solutions will be denoted by $u_\nu, v_\nu, Z_\nu$.   We shall mostly use the Riemann invariant coordinates $(u_\nu,Z_\nu)$ except when passing to the limit in the weak formulation.

 This algorithm is explained in many books \cite{Br00,D16,HR} on hyperbolic systems. 
Taking advantage of the structure of the triangular system  \eqref{eq:scl}, \eqref{eq:lin}, we will mix the WFT for the scalar case \cite{D72} and for systems \cite{Br00,HR}.
   The main principle is to work with piecewise constant approximations. 
   \medskip 
   
   As in the scalar case \cite{Br00,HR,JR1,JR2}, the values of $u_\nu$ are taken on a uniform grid parametrized by  the integer $\nu$, that is,  
    \begin{align} u_\nu  \in   \nu^{-1} \Z .
    \end{align} 
 On the other hand, $v_\nu$,  or equivalently $Z_\nu$,  is not required  to stay on the uniform grid, that is, 
    \begin{align} 
    v_\nu,  Z_\nu \in \R,
    \end{align}
    as we  solve the Riemann problem with the exact flux $f$ to compute $v_\nu$,  or equivalently $Z_\nu$.   In this way, we can use the cubic estimate on the global Rankine-Hugoniot curves  of Lemma \ref{lem:flatRH}.
      A similar approach for a convex case was used in \cite{HJ}.
        \bigskip
 
 The initial data are approximated as follows.
 Let $N_0$ + 1 be the number of constant states for the approximate initial data $u_{0,\nu}, v_{0,\nu}$  and the corresponding $Z_{0,\nu}$. Here $u_{0,\nu} $ takes values on the grid and $N_{0}$ is a function of $\nu $.\\
{\color{black}{To ensure that $(u_{0,\nu},v_{0,\nu})$ converges towards the initial data $(u_0,v_0) $, we need the condition that $N_{0} $ goes to infinity as $\nu $ tends to infinity.\\ }}
The approximate initial data can be chosen to satisfy the following uniform estimates with respect to $\nu$ \cite{JR1,JR2}:
\begin{align}
   \|u_{0,\nu} \|_\infty  \leq    \|u_{0} \|_\infty ,  \\
   TV^s u_{0,\nu} \leq   TV^s u_{0},         \\
   \|v_{0,\nu} \|_\infty  \leq    \|v_{0} \|_\infty.    
\end{align}
Moreover, $( u_{0,\nu}, v_{0,\nu}) $ converges pointwise almost everywhere to $( u_{0}, v_{0}) $ when $\nu \rightarrow + \infty$ and therefore, the previous inequalities become equalities at the limit $\nu \rightarrow + \infty$.
\bigskip

 The flux $f$ is replaced by a piecewise linear continuous flux  $f_{\nu} $ \cite{D72} that coincides with $f$ on the uniform grid \cite[Ch. 6]{Br00}, \cite[p. 70]{HR}, 
\begin{align}\label{f-nu}
  f_\nu   ( k/\nu)=   f(k/\nu), &    & \forall k \in \Z.
\end{align} 
At $t=0+$, $N_0$ Riemann problem are solved. The approximate solution $u_\nu$  is the weak entropy solution of
\begin{align} \label{eq:scl-nu}
\partial_t u_\nu + \partial_x f_\nu(u_\nu)= 0, &  & u_\nu(x,0)= u_{0,\nu} (x).
\end{align}
With strong compactness on $u_\nu$, the Kruzkov entropy solution is recovered \cite{Br00,HR}.
\medskip

For $  (u_\nu,v_\nu)$ the approximate vectorial flux $F_{\nu}(u,v)=\left(f_\nu(u), a(u)v \right)$ is used, as detailed below.      
The reason for this choice is twofold. First, to have a simpler wave front tracking for the scalar equation for $u$ with a non-convex flux following the seminal paper \cite{D72} (this case is also very well detailed in the textbook \cite{HR}).  This wave front tracking algorithm does not lead to any consistency error in the weak formulation \cite{D72}.  
Secondly, to use the new generalized cubic estimates for $Z_\nu$ which also hold for the solution $(u_\nu, v_\nu)$ of the system with the approximate flux $F_{\nu}$, as $f_{\nu}=f$ on the grid   $\{ (u,v) \in (\nu^{-1}\Z )\times \R \}$. 
\smallskip

After some time, some of the Riemann problems interact and hence, new Riemann problems have to be solved.  
The non-linear interactions for the triangular system are described in detail
below.
 The process continues until the second time of interactions and so on. 

This process can be continued for all time
because
 there is only a finite number of interactions for all time.  
 For $u_\nu$ this follows from the existing results for the scalar case. An explicit bound on the number of interactions is given in \cite[p. 71-72]{HR}. The $2$-waves, associated to the linearly degenerate eigenvalue $a(u_\nu)$, never interact together since they are contact discontinuities.
 Due to the transversality assumption (USH), a $2$-wave can interact only once with  a $1$-wave and creates  a new $2$-wave (and does not modify the $1$-wave). Thus, the number of such interactions and  of $2$-waves is finite. This proves that the WFT is well defined for all time.   
\bigskip

Now, the approximate Riemann solver and the nonlinear interactions of the waves are detailed.

\subsubsection*{Approximate Riemann solver} \label{ss:approx-solver}

In this short section, the approximate Riemann solver is detailed.
Let $(u_-,Z_-)$, $(u_+,Z_+)$ denote the left and right states in the initial data.
 For $u_\nu$  the solution  is a series of entropic jumps  $u_{-}=u_0 <u_1<\ldots<u_m=u_{+}$ for the piecewise linear flux $f_\nu$.    For $Z_\nu$, there are many possibilities.  We want to approximate the exact solution of the Riemann problem and  keep the cubic estimates (Proposition \ref{pro:cub}) which generalize the Lax cubic estimates for genuinely nonlinear waves.   {\color{black}{So, we use the Rankine-Hugoniot curve to determine $Z_{m-1}, \ldots, Z_1$.}} The states    
$Z_i$, $i=m,m-1,\ldots, 2$ are built as follows:
 \begin{itemize}
 \item {If there is a jump between $u_{i-1}$ and $u_i$ corresponding to a shock or a contact discontinuity for $f_{\nu} $, then $Z_{i-1}$ is given by the Rankine-Hugoniot curve \eqref{def:RH},
 \begin{equation}
   Z_{i-1}=  \mathcal{RH}_+(u_{i-1};u_i,Z_{i}).
   \label{Z-WFT}
 \end{equation}
Note that $\mathcal{RH}^{\nu}_{+}=\mathcal{RH}_{+} $ on the grid $\{ (u,v) \in (\nu^{-1}\Z )\times \R \}$, where $\mathcal{RH}^{\nu}_{+}$ denotes the Rankine-Hugoniot curve corresponding to the approximate flux $F_{\nu}$. Thus, the jump  between $ (u_{i-1},Z_{i-1})$  and  $(u_{i},Z_{i})$ satisfies the Rankine-Hugoniot condition and there is no error in the weak formulation of the linear equation satisfied by $v$ and hence, no error in the weak formulation of the  approximate triangular system \eqref{eq:scl-nu}, \eqref{eq:lin}, where the exact flux $f$ is replaced by the piecewise-linear flux $f_\nu$ in \eqref{eq:scl}.}
\item  {If there is a 2-contact discontinuity (along which $u$ is constant and there is a jump between $v_\pm$), then the contact discontinuity is solved with the exact speed $a(u)$. Thus, again, there is no error of consistency  for the approximate system \eqref{eq:scl-nu}, \eqref{eq:lin}.}

 \end{itemize}

\subsubsection*{Nonlinear  wave interactions}    \label{ss:interactions}
We briefly describe the different possible nonlinear wave interactions and the details of the interactions are given in the $(u,Z)$ plane. 
{\color{black}{Here $Z$ denotes the 1-Riemann invariant with respect to the exact flux $F$.
\begin{lemma}\label{cubic-app}
The cubic estimates for $Z$ proved in  Proposition  \ref{pro:cub} also hold true across 1-waves (shock and 1-contact discontinuity) corresponding to the approximate flux   $F_{\nu}$. 
\end{lemma}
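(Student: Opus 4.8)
The plan is to deduce Lemma~\ref{cubic-app} directly from Lemma~\ref{lem:flatRH}, essentially by replaying the proof of Proposition~\ref{pro:cub} and checking that replacing $f$ by $f_\nu$ introduces no new term and no $\nu$-dependence in the constants. The one substantive observation is the grid coincidence \eqref{f-nu}: $f_\nu=f$ on $\nu^{-1}\Z$, while the approximate solution $u_\nu$ takes values only on this grid. Hence, if $(u_{i-1},Z_{i-1})$ and $(u_i,Z_i)$ are joined by an elementary $1$-wave of the approximate Riemann solver --- a shock or a $1$-contact discontinuity of $f_\nu$ --- then $u_{i-1},u_i\in\nu^{-1}\Z$ and the speed of the jump is $s=(f_\nu(u_i)-f_\nu(u_{i-1}))/(u_i-u_{i-1})=(f(u_i)-f(u_{i-1}))/(u_i-u_{i-1})=s(u_{i-1},u_i)$, the exact Rankine-Hugoniot speed. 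Since the second component $a(u)v$ of the flux is the same for $F_\nu$ and $F$, the relation \eqref{RH} across the jump is literally that of the exact system, so $Z_{i-1}=\mathcal{RH}_+(u_{i-1};u_i,Z_i)$ with the exact curve \eqref{def:RH} --- which is precisely how the construction \eqref{Z-WFT} is set up. A $1$-contact discontinuity of $f_\nu$, on which $f_\nu$ is affine and $f_\nu'$ is constant, is simply the limiting case of this, and Lemma~\ref{lem:flatRH} covers it since that lemma is stated for all pairs in $[-M,M]^2$ with no convexity assumption.

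Granted this, I would apply Lemma~\ref{lem:flatRH} jump by jump. The maximum principle for \eqref{eq:scl-nu} together with $\|u_{0,\nu}\|_\infty\le\|u_0\|_\infty=M$ keeps every intermediate state of a composite $1$-wave in $[-M,M]$, so $Z_{i-1}=Z_i\bigl(1+\mathcal{O}(1)\,|u_i-u_{i-1}|^3\bigr)$ with an $\mathcal{O}(1)$ depending only on the $C^3$ norms of $f'$ and $a$ on $[-M,M]$ and on the gap in \eqref{USH}; crucially this constant does not involve $f_\nu$ at all, only the exact $\mathcal{RH}_+$, hence it is independent of $\nu$. Then I would copy the summation from the proof of Proposition~\ref{pro:cub}: ordering the states $u_-=u_0,\dots,u_m=u_+$, the inequality $1+x\le e^x$ and telescoping give $\max_i|Z_i|\le|Z_+|\exp\bigl(\mathcal{O}(1)\sum_i|u_i-u_{i-1}|^3\bigr)$, and since the $u_i$ are monotone between $u_-$ and $u_+$ one has $\sum_i|u_i-u_{i-1}|^3\le|u_+-u_-|^3$; a second use of the maximum principle absorbs the exponential into an $\mathcal{O}(1)$, yielding the $\|Z\|_\infty$ and $TV\,Z$ estimates with constants uniform in $\nu$.

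There is no serious obstacle here; the only point deserving care is this $\nu$-uniformity and the subtlety it guards against. Because $f_\nu$ is piecewise linear, each of its breakpoints is a spurious inflection point, so a composite $1$-wave of the approximate solver may consist of a number of elementary pieces that grows with $\nu$; the argument of Proposition~\ref{pro:cub} is nevertheless insensitive to this, since it controls the wave only through the telescoped quantity $|u_+-u_-|$ and through the $\nu$-free cubic constant of Lemma~\ref{lem:flatRH}, the discretized rarefaction parts contributing nothing because $Z$ is the $1$-Riemann invariant and is kept constant across them. I would close by remarking that this is exactly what makes the interaction estimates of the present subsection go through as well, the incoming and outgoing states there being again grid values whose $Z$-component is updated through the exact Rankine-Hugoniot curve $\mathcal{RH}_+$.
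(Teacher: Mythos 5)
Your proposal is correct and follows the paper's own argument: the proof in the text is exactly the observation that $f_\nu=f$ on the grid $\nu^{-1}\Z$, hence $\mathcal{RH}^{\nu}_{+}=\mathcal{RH}_{+}$ on grid values, so the construction \eqref{Z-WFT} invokes the exact Rankine--Hugoniot curve and Lemma \ref{lem:flatRH} applies verbatim, after which the summation of Proposition \ref{pro:cub} carries over. Your additional remarks on the $\nu$-uniformity of the constants and the harmlessness of the growing number of elementary pieces are correct elaborations of points the paper leaves implicit, not a different route.
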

\begin{proof}
This immediately follows from \eqref{Z-WFT}, the fact that $\mathcal{RH}^{\nu}_{+}=\mathcal{RH}_{+} $ on the grid and the cubic estimates for $Z$ across 1-waves corresponding to $f$ (Proposition \ref{pro:cub}).
\end{proof}
}}
In all the cases, we consider three states $\tilde{U}_-,\tilde{U}_0,\tilde{U}_+$ before the interaction. Here $\tilde{U}_0$ denotes the intermediary state which disappears after the interaction.
The states $\tilde{U}_-,\tilde{U}_0$ are connected by an elementary wave and similarly the states
$\tilde{U}_0,\tilde{U}_+$ are connected by an elementary wave. An elementary wave is either a 1-wave: shock or a small jump, or a 2-wave: a contact discontinuity. 
At a time of interaction $t_{interact}$, the Riemann problem is solved with a new intermediary state $\tilde{U}_m$ (see Figure \ref{fig:Interaction}).  

\begin{figure}[H]
\includegraphics[clip, trim=50 400 100 100, scale=0.8]{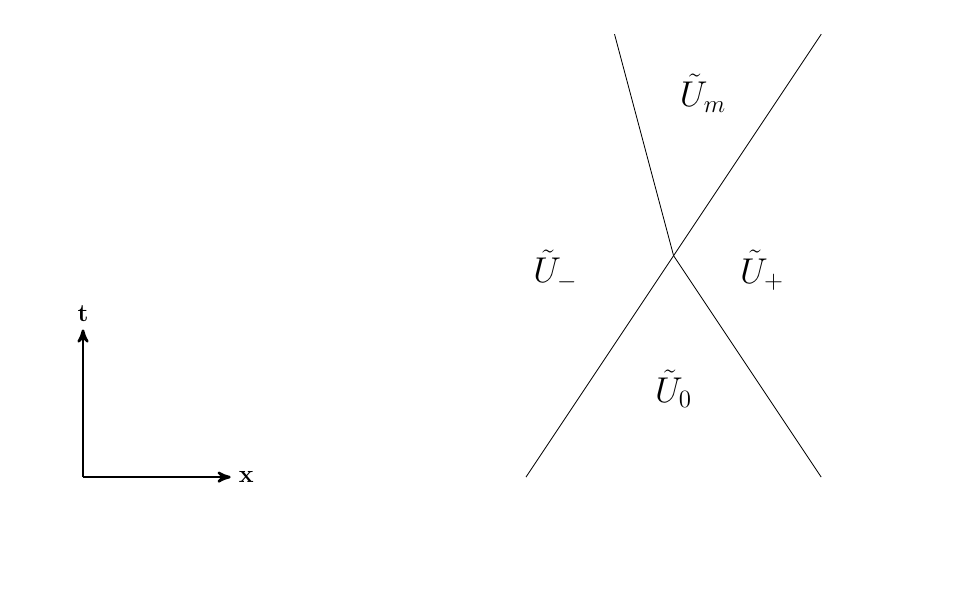}
\vspace*{-5mm}
\caption{Nonlinear interaction of two waves. The 1-wave on the left crosses the interaction point with the same speed and the same value $u_-$ on the left of the 1-wave ($u_m=u_-$) and $u_+$ on the right.  On the other hand, the second wave is affected by the interaction: the speed of the 2-wave and a new value $Z_m$ appears. }
\label{fig:Interaction}
\end{figure}
We shall use the following notations. 
\begin{itemize}
\item    $S$ or $S_1$  stands for  a shock wave which is always a 1-wave.
 
\item  $C_1$ or $C_2$ stands for a contact discontinuity associated to $\lambda_1$ or $\lambda_2$, a 1-wave or a 2-wave. 
$C_1$ is considered as a degenerate shock $S_1$. 
\end{itemize}
 The key point here is to understand the effect of the $L^\infty$ norm of $Z_{\nu}$ after the interaction. \\
 First of all, we note that there is no self interaction for the second family since there are only  $C_2$ waves. Also the case of interactions between the $1$- waves of the first family have already been well-studied \cite{Br00},  \cite{D16},\cite{HR} for the component $u_\nu$ and are not presented here.  The new feature is the effect of this interaction on the second component $Z_\nu$. We have seen in Lemma \ref{cubic-app} that the change in $L^{\infty} $ norm of $Z_{\nu}$ is of the order of the cube of the change in $u$ (or $u_{\nu} $).

Finally, we consider the interaction of a 1-wave with a  2-wave.
\begin{enumerate}
\item   In the case of an interaction of the form $ S_1-C_2$ (which means that a 1-shock interacts with a 2-contact discontinuity), the outgoing wave is of the form $C_{2}-S_{1}$.   \\   The shock continues with the same slope and the same value $u_-,u_+$ and still satisfies the entropy condition \eqref{Oleinik}. Thus, for $u$ there is no change before and after the interaction. Roughly speaking, the interaction of a 1-shock with a 2-contact discontinuity is transparent for $u$. 
On the contrary, there is a change for $Z_{\nu}$ and following \cite{HJ}, it can be shown that the change in $L^{\infty} $ norm of $Z_{\nu}$ is of the order of cube of the change in $u_{\nu}$.

\item   The interaction $C_1-C_2$ (that is, when a 1-contact discontinuity interacts with a 2-contact discontinuity)  generates outgoing waves of the form $C_2-C_1$. \\
This case can be dealt in a similar manner as in the case of $S_1-C_2 $ and it follows that the change in $L^{\infty} $ norm of $Z_{\nu}$ is of the order of cube of the change in $u_{\nu}$. 

\end{enumerate}

The reader interested by all the cases $S_1-C_1$, $C_1-S_1$, $S_1-C_2$, $C_1-C_2$ can consult \cite{HJ}  (and different notations where the $1$ and $2$ waves have to be exchanged) in the plane of Riemann invariant, a technique  used in \cite{Sm} for the isentropic Euler system.  Here we present the last two cases for the triangular system, the case  $S_1-C_2$ in figure \ref{fig:Interaction-S1-C2} and the case $C_1-C_2$ in figure \ref{fig:Interaction-C1-C2}. 

\begin{figure}
\includegraphics [scale=0.4]{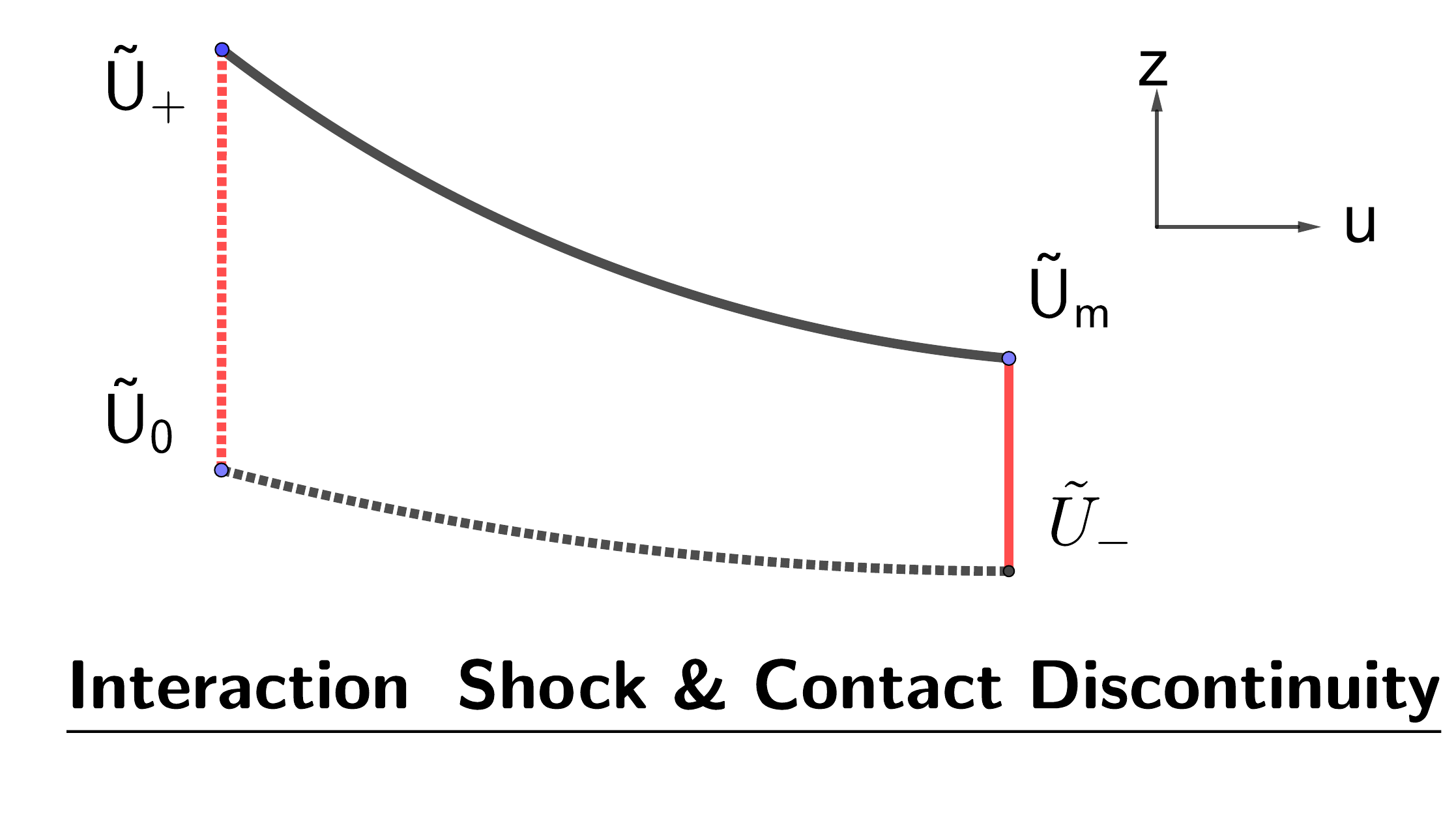}
\caption{Interaction of a  shock with a $2$-contact discontinuity.  The interacting waves  are represented by dotted lines,  a 1-wave in black  followed by a 2-wave in red. The full  lines represent the resulting waves.}
\label{fig:Interaction-S1-C2}
\end{figure}

\begin{figure}
\includegraphics [scale=0.4]{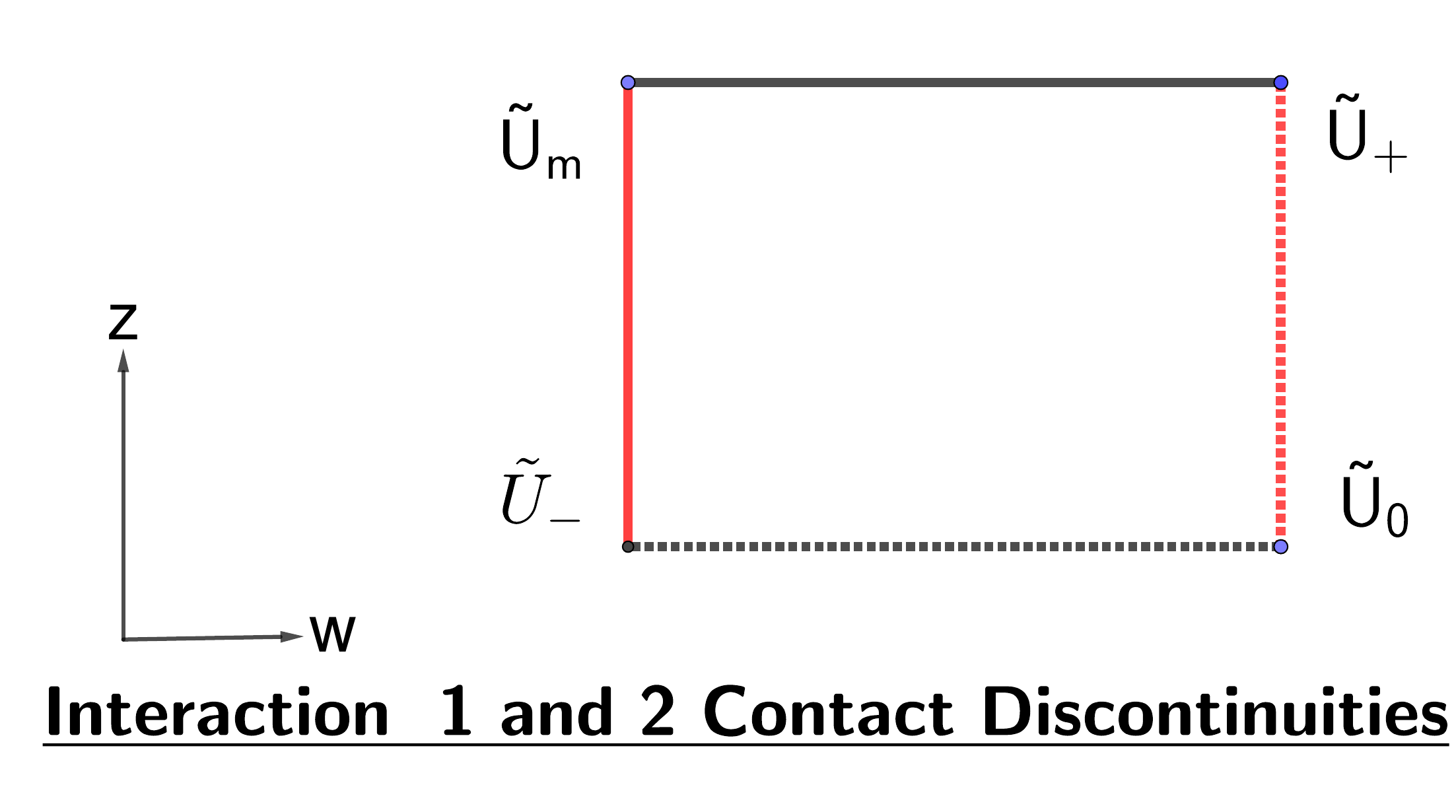}
\caption{Interaction between a $1$-contact discontinuity and a $2$-contact discontinuity.  The interacting waves  are represented by dotted lines,  a 1-wave in black (horizontal line)  followed by a 2-wave in red (vertical line). The full  lines represent the resulting waves.}
\label{fig:Interaction-C1-C2}
\end{figure}

\subsection{Uniform estimates}\label{uni-est}

The $BV^s$ estimates for $u_\nu$ are already known since $u$ is the entropy solution of the scalar conservation law \eqref{eq:scl}. These estimates are recalled briefly  in the first paragraph.
The only difficulty in this section is to obtain  the $L^\infty$ estimates for $v$. For this purpose, we generalize  the approach first used in \cite{BGJP} and recently in \cite{HJ}. The approach consists of bounding $v_\nu$,  indeed $Z_\nu$, along the $2$-characteristics.   For the chromatography system \cite{BGJP}, the $2$-characteristics are simply straight lines. 
  In general, here, the $2$-characteristics are piecewise linear. They are uniquely defined since the second eigenvalue $a(u)$  is linearly degenerate and its integral curves are  transverse to the discontinuity lines of the first field (assumption (USH)).  The precise definition of such characteristics is given in the second paragraph.  
Then the estimate on $Z_{\nu}$ along $2$-characteristics,  as in \cite{BGJP,HJ}, is given in the last paragraph using our generalized estimates on the Lax curves.

\subsubsection*{$BV^s$  estimates for $u$}  \label{ss:u-BVs}

The   $TV^s$ decay   is known for   the Glimm scheme, the Godunov scheme and the Wave Front Tracking algorithm \cite{BGJ6,BGJP}.   Another argument is that  $u_\nu$ is also  the exact  entropy solution of the scalar conservation law \eqref{eq:scl-nu} with the piecewise-constant initial data $u_{0,\nu}$, and the  decay of $TV^s u$ gives the uniform estimates with respect to $\nu$,
\begin{align}
TV^s u_\nu(\cdot,t)  \leq TV^s u_{0,\nu}  \leq TV^s u_0.
\end{align}

\subsubsection*{The approximate 2-characteristics}  \label{ss:approx-charac}
 
 Essentially, for the WFT, an approximate $i$-characteristic  is a continuous curve which is piecewise linear  following the velocity  $\lambda_i$, $i=1,2$. 
 Since the eigenvalues depend only on $u$, there is a problem to define an $i$-characteristic where $u$ is not defined.      For $i=2$, there is no problem of uniqueness, since a $2$-characteristic is always transverse to the discontinuity lines of $u$.   Thus, the $2$-characteristic crosses the $u$ discontinuity with a kink. \\
Let $\gamma_{\nu}(x_0,t) $ be the forward generalized $2$-characteristic starting at the point $x_{0} $,  that is $\gamma_{\nu}(x_0,t) $ is a solution of the differential inclusion
\begin{align*}
\frac{d}{dt}\gamma_{\nu}(x_0,t) &\in
 \left[ 
 a \left(u_{\nu}(\gamma_{\nu}(x_{0},t)-0,t)\right), 
 a \left(u_{\nu}(\gamma_{\nu}(x_{0},t)+0,t) \right)  \right ] , 
  && 
  \gamma_{\nu}(x_0,0)=x_{0}.
\end{align*}
 For the wave front tracking, these 2-characteristics are uniquely determined and  are piecewise linear continuous curves (thanks to the transversality assumption (USH)) and satisfy the differential equation 
 \begin{align}
 \frac{d}{dt}\gamma_{\nu}(x_0,t) & =  a\left(  u_{\nu} \left( \gamma_{\nu}(x_{0},t),t \right) \right)
 , &&  
   \gamma_{\nu}(x_0,0)=x_{0},
 \label{Z-char}
 \end{align}
except for a finite number of times which correspond to a jump of the piecewise constant function $u_\nu$.

\subsubsection*{$L^\infty$ estimates for $v$}  \label{ss:bound-v}

\begin{figure}
\vspace{-0.5cm}
\includegraphics[scale=0.8]{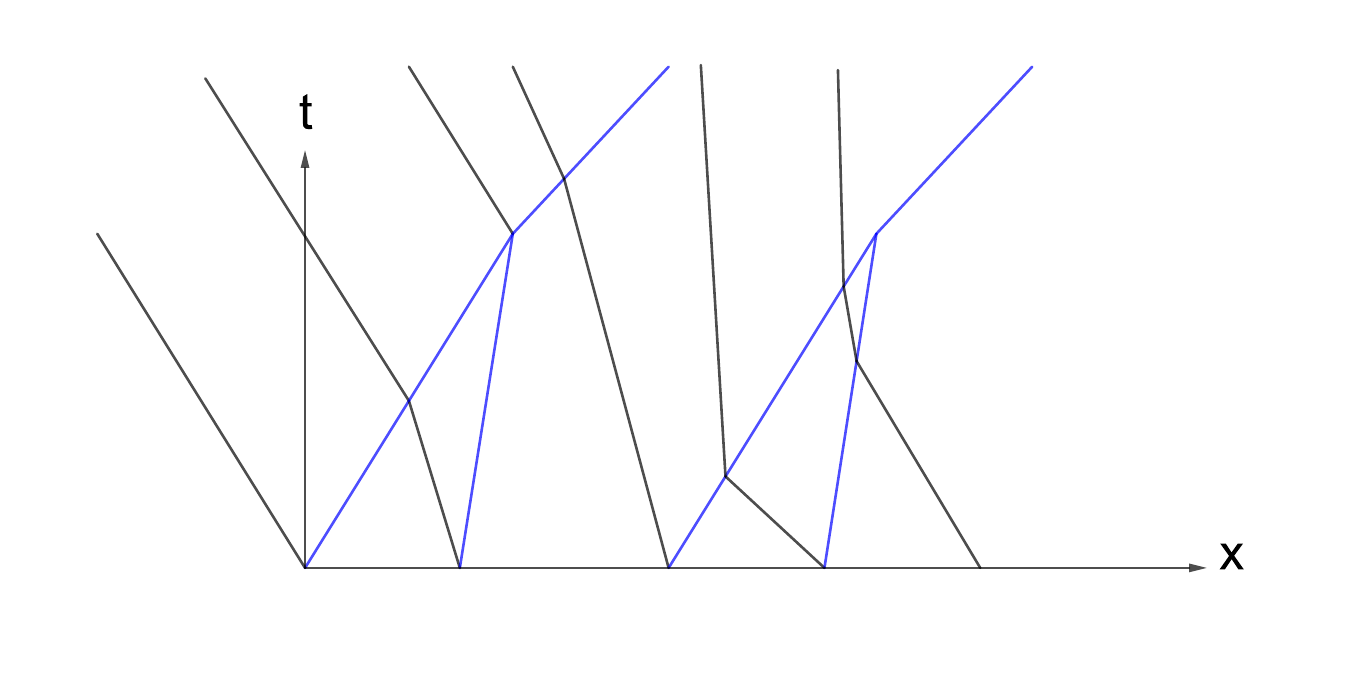}
\caption{Wave front tracking algorithm.  For the picture, $f'(u)>0 > a(u)$. Thus the 1-waves in blue go to the right and the 2-waves in black  to the left.  Notice that  these  1-waves are not affected by the interaction with the  2-waves, but  the 2-waves are affected by the interaction. 
}
\label{fig:WFT}
\end{figure}

The $L^\infty$ estimate on $v_\nu$ is  first obtained  on the approximate Riemann invariant $Z_\nu$.  
The key point is then to bound  $Z_\nu$  in $L^\infty$. 
 $Z_\nu$ is not constant  through a  1-wave: a shock wave or a 1-contact discontinuity.
But, we know that the variation of $Z_\nu$ is of order of the cube of the variation of $u_\nu$, by lemma \ref{cubic-app}.  
When  there is no 1-wave, the simple curve to bound $Z_\nu$ is the $2$-characteristic. In this case, $Z_\nu ( \gamma_\nu(x_0),t) = Z_\nu(x_0,0)$.   

Note that the $2$-characteristics starting at time $t=0 $ span the whole $(t,x) $ half-plane but $Z_{\nu}$ is only defined outside a finite number of $2$-characteristics.

For example, $Z_{\nu} $ is not well-defined on a $2$-characteristic that contains a $2$-wave (contact discontinuity for the second family). But these are only finite in number as the number of $2$-wave fronts is finite. 

Also $Z_{\nu}$ is not well-defined on $2$-characteristics passing through the points of intersections of waves corresponding to the first family. Again there are only finite number of such curves.

$Z_{\nu} $ is also not well-defined on a finite number of $2$-characteristics originating from the points on $t=0 $ where the Riemann problems are solved initially.

But since there are only finite number of such $2$-characteristics, this is enough to estimate $\|Z_\nu\|_\infty $. 

 Now, due to the transversality conditions,  a 2-characteristic meets many 1-waves. Thanks to lemma \ref{cubic-app}, we have a cubic estimate for the $L^\infty$ norm and the total variation of $Z_{\nu}$ through a 1-wave. More precisely, if $t_1$ is a time just before the 2-characteristic meets a 1-wave and $t_2$ is the time just after the 2-characteristic crosses the 1-wave, we have the following estimate,  
\begin{align*}
  \|Z_\nu \|_{\infty, ( \gamma_{\nu}(x_0,\cdot),\cdot)[t_1,t_2] }   
  & \leq |Z_\nu(t_1)| \exp  \left( \mathcal{O} \left(  TV^{1/3} u_\nu( \gamma_{\nu}(x_0,t_2),t_2)
   \right)\right), \\
  TVZ_\nu ( \gamma_{\nu}(x_0,\cdot),\cdot)[t_1,t_2]  & \leq    \|Z_\nu \|_{\infty, ( \gamma_{\nu}(x_0,\cdot),\cdot)[t_1,t_2] } 
\left( \mathcal{O}( TV^{1/3}u_\nu  ( \gamma_{\nu}(x_0,\cdot),\cdot)[t_1,t_2] )\right).
\end{align*}
 That means that  the $L^\infty$ norm and the  total variation of $Z_\nu$ 
  along the piece of curve $\{ ( \gamma_{\nu}(x_0,t),t),\, t \in [t_1,t_2] \}$ is controlled by 
  $TV^{1/3}u_\nu$  along the same piece of curve.
  Notice also that the sign of $Z_\nu$ is constant along a 2-characteristic by lemma \ref{lem:flatRH}.

  First, the $L^\infty$ norm of $Z$ is bounded on the 2-characteristic and, second, the estimate on $TV \, Z_\nu$ follows.
The total variation  is additive  and the fractional total variation is sub-additive \cite{Bruneau74}. So  adding all these estimates, on the whole 2-characteristic $ \Gamma(x_0)= \{ ( \gamma_{\nu}(x_0,t),t),\, t > 0 \}$ starting at  $x=x_0$,
 we have the  estimates
  \begin{align*}
  \| Z\|_{\infty, \Gamma(x_0) } & \leq |Z_{0,\nu}(x_0)| \exp \left( \mathcal{O} \left(  TV^{1/3} u_\nu  \left[\Gamma(x_0) \right] \right) \right), \\
           &  \leq \|Z_{0}\|_\infty \exp \left( \mathcal{O} \left(  TV^{1/3} u_\nu \left[\Gamma(x_0) \right] \right) \right), \\
  TVZ_\nu \left[\Gamma(x_0) \right]  & \leq  
    \|Z_{0}\|_\infty \Psi \left( \mathcal{O} \left(  TV^{1/3} u_\nu \left[\Gamma(x_0) \right] \right) \right)
 ,  \\
  \Psi(x) &=  x \exp(x).
\end{align*}
Now note that $\Gamma(x_0)$  is a space like curve for the 1-characteristics of \eqref{eq:scl-nu} (\cite{D16}). Hence 
\begin{align*}
 TV^{1/3}u_\nu \left[\Gamma(x_0) \right]  \leq  TV^{1/3}u_{0,\nu} \leq   TV^{1/3}u_{0}. 
\end{align*}
This follows from the fact the wave-fans corresponding to $u_{\nu} $ are monotone with respect to the left and right end states. Therefore no extremal values are added when we measure the fractional variation along $\Gamma(x_{0}) $.\\
Together these yield $L^\infty$ and $BV$ bounds for $Z_\nu$ (uniform with respect to $\nu$) along the 2-characteristics, 
\begin{align}
    \|Z_\nu \|_\infty   & \leq 
         \|Z_{0}\|_\infty  \exp \left( \mathcal{O}(1) TV^{1/3}u_{0} \right) , \\
     TVZ_\nu \left[\Gamma(x_0) \right]  & \leq  
    \|Z_{0}\|_\infty \Psi \left( \mathcal{O} \left(  TV^{1/3} u_0\right) \right) . 
\end{align}

Notice also that the positivity is preserved.  If $\inf v_0 > 0 $, then $ \inf Z_0 > 0$ and by similar arguments presented above, we have $ \inf Z_{\nu} > \inf Z_0   \exp \left( \mathcal{O}(1) TV^{1/3}u_{0} \right)   > 0$, where  the constant $\mathcal{O}(1)$ is negative. 
Finally,   a   $L^\infty$ bound (respectively a positivity) for $Z_\nu$    yields    a   $L^\infty$ bound (respectively a positivity) for $v_\nu$.
\medskip

For the triangular system,  the $L^\infty$ bound of $Z_\nu$ and hence of $v_\nu$  is enough to pass to the weak  limit in \eqref{eq:lin} since  the left hand side is linear with respect to  $v_\nu$.   
The  uniform $BV$ bound on $Z_{\nu}$  along the 2-characteristics can be used to recover a strong trace at $t=0$, like in \cite{BGJ2}  (at $x=0$).

\subsection{Passage to the limit in the weak formulation}    \label{ss: weak sol}
Passing to the  strong-weak limit  in the equation \eqref{eq:lin} which is linear with respect to $v$  allows us to get a weak solution. But to do so, we need to understand the error of consistency of the scheme.  There is no error of consistency in the independent scalar equation for $u_\nu$ \eqref{eq:scl-nu} \cite{D72}.  By construction,  our approximate solver  does not produce error for the linear equation \eqref{eq:lin}.  That means that the approximate solution $(u_\nu, v_\nu)$ is an exact solution of the approximate system  \eqref{eq:scl-nu},\eqref{eq:lin}:
 \begin{align*}
 \partial_t u_\nu   &+  \partial_x f_\nu (u_\nu)    =  0 ,
  \\
  \partial_t v_\nu  &+   \partial_x \left ( a (u_\nu) v_\nu \right )   =   0.  
  \end{align*} 
  Therefore as in \cite{D72}, one can pass to the limit and obtain a weak solution of the exact system \eqref{eq:scl},\eqref{eq:lin}.

\section{
Blow-up of  an entropy  solution with $u_0 \notin BV^{1/3}$
  } \label{sec:boum}

 In this section, we provide a proof of Theorem \ref{blow}. 
 For this purpose, we construct initial data $u_{0}, Z_{0}$ that satisfy
\begin{equation}
\begin{cases}
\begin{aligned}
 u_{0} &\in BV^{1/3-0}(\mathbb{R}) ,\\
Z_{0} &\in L^{\infty}(\mathbb{R}),
\end{aligned}
\end{cases}
\notag
\end{equation}
and exhibit blow-up in $L^{\infty} $ norm of $Z$ at $t=0+ $ for the system \eqref{eq:scl}-\eqref{eq:lin}.\\
Recall that the notation $u_{0} \in BV^{1/3-0}(\mathbb{R})$  means that $u_0$ is in $BV^s$ for all $s \in [0,1/3)$.
Our idea of construction is motivated by similar examples studied in \cite{AGV1, CJ2, DW,GGJJ}.
\\
We would like to emphasize that the solution proposed below is not based on the Wave-Front Tracking algorithm discussed earlier, rather it is an exact solution.
\smallskip

Notice that when $v_0 \equiv 0$, i.e.  $Z_0 \equiv 0$, no blow-up occurs  since $(u,v)\equiv(u, 0)$ gives a global entropy solution   where $u$ the entropy solution associated to the $L^\infty$ initial data  $u_0$.   Thus, in the following construction, we have to avoid the value $0$ for $Z$.

Let us consider the system \eqref{eq:scl}-\eqref{eq:lin} with flux $f(u)=\frac{u^2}{2} $, $a(u)=u-1 $ and initial data $u_0$ as described below and  $Z_0 \equiv 1 $.\\
Let $x_0=0 $ and $x_n=1-\frac{1}{2^n}, \ n\geq 1 $. Let $B_n$ be chosen such that
\[
 x_n=x_{n-1}+2 B_n,
 \]
that is, $B_n=\frac{1}{2^{n+1}} $. Let $b_n=\frac{1}{(n+26)^{\frac{1}{3}}} $ and we define
\[U_{0}(x,b,B)=b \mathbbm{1}_{[0,B)}(x)-b \mathbbm{1}_{[B,2B)}(x). \]
Using this we define the initial data $u_0$ as
\[u_{0}(x)=\sum_{n \geq 1} U_{0}(x-x_{n-1},b_n,B_n)  .\]
Note that $\|u_{0} \|_{\infty} \leq \frac{1}{3} $ and therefore, the condition of uniform strict hyperbolicity is satisfied.\\
\begin{figure}
   \centering
   \vspace*{-1cm}
    \includegraphics[scale=0.8]{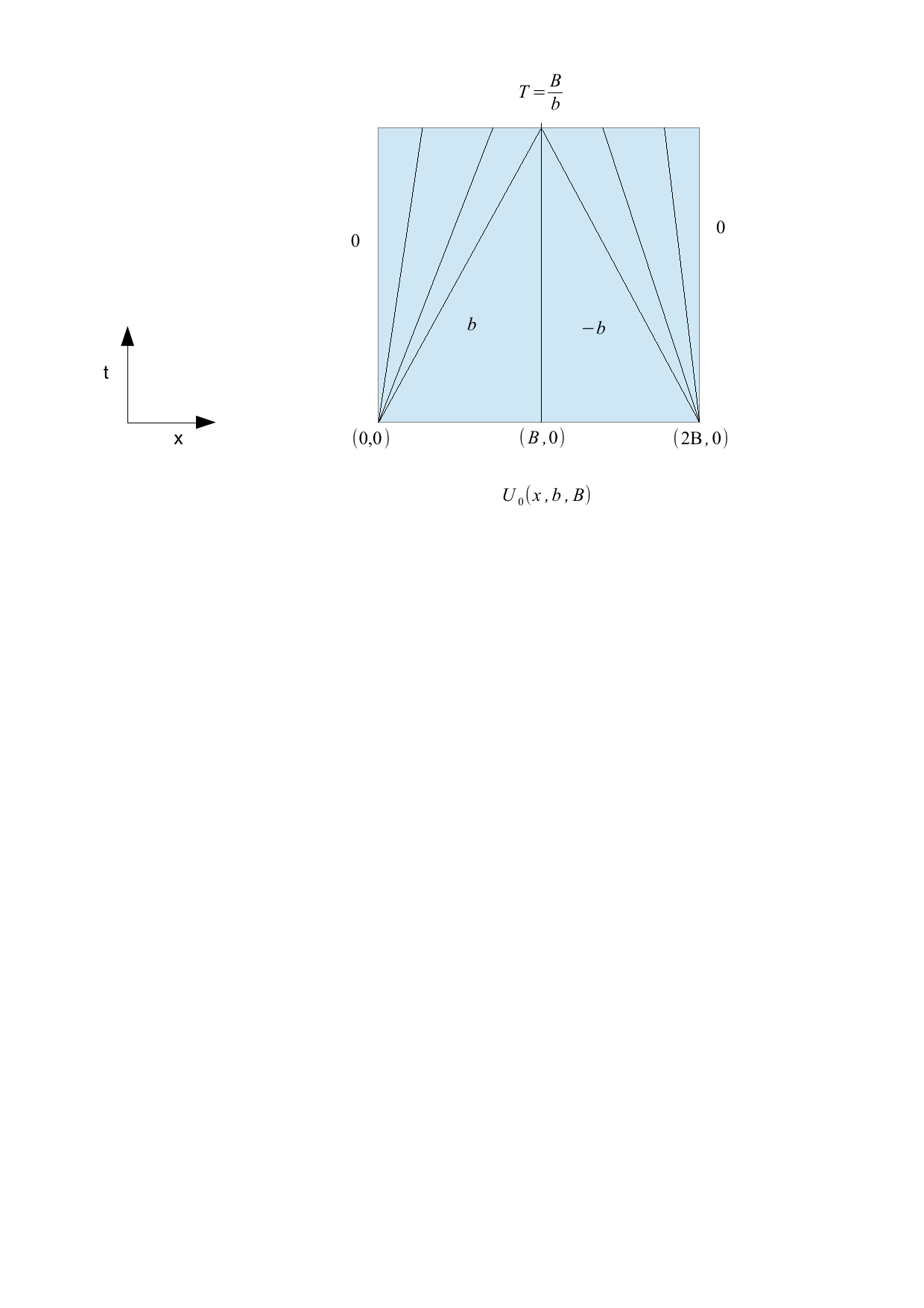}
      \vspace*{-150mm}
    \caption{A typical building-block}
\end{figure}
The first interaction times $T_n $ for the Riemann problems for the equation 
\[\partial_{t}u+\partial_{x} f(u)=0 \]
with initial data $u_0$ is given by \[T_{n}=\frac{B_n}{b_n}=\frac{(n+26)^{\frac{1}{3}}}{2^{n+1}} .\]
Note that the first interaction time $T_{n}$ satisfies the relation 
\begin{equation}
T_{n}>1-x_{n} .
\label{int-time}
\end{equation}
The initial data $u_0$, described above, clearly does not belong to $BV^{\frac{1}{3}}(\mathbb{R})$, but, as
 in \cite{BGJP,CJ2} 
  we can conclude that $u_{0} \in BV^{1/3-0}(\mathbb{R})$.\\ 
We use the forward generalized characteristic for $Z$.  Since $a(u)=u-1 $ and the first interaction times of the Riemann problems for $u$ satisfy \eqref{int-time}, it follows that the forward generalized characteristic for $Z$ starting at the point $x_{\infty}:=1$ crosses infinitely many shocks before the first interaction of the waves in $u$.\\
As we have already seen from the nonlinear wave interactions, the $L^{\infty}$ norm of $Z$ does not change when it interacts with a $1-$rarefaction wave. 

Now, let us consider a left state $u_{-} $ and a right state $u_{+}$ connected by a 1-shock wave. In our example, we have $u_{-}=b>0 $ and $u_{+}=-b<0 $. Hence, the speed of the shock $s=0$. Also by construction $0< b_{n}<\frac{1}{2}<1 $ and hence we assume that the prototype $b $ satisfies the same.\\
Now 
\begin{equation}
\begin{aligned}
\frac{Z_{-}}{Z_{+}}&= \frac{s-a(u_{+})}{s-a(u_{-})} \exp \left[A(u_{-})-A(u_{+}) \right] \\
&=\frac{a(u_{+})}{a(u_{-})} \exp \left[A(u_{-})-A(u_{+}) \right]\\
&=\frac{u_{+}-1}{u_{-}-1} \exp \left[A(u_{-})-A(u_{+}) \right]\\
&=\frac{1+b}{1-b} \exp \left[A(u_{-})-A(u_{+}) \right].
\end{aligned}
\label{equation-200}
\end{equation}
Also 
\begin{equation}
\begin{aligned}
A'(u)&=\frac{a'(u)}{a(u)-f'(u)}=\frac{1}{(u-1)-u}=-1<0,
\end{aligned}
\notag
\end{equation}
and hence 
\begin{equation}
A(u_{-})-A(u_{+})=\int_{u_{+}}^{u_{-}} A'(u)\ du=-(u_{-}-u_{+})=-2b.
\notag
\end{equation}
Therefore from \eqref{equation-200}, we have 
\begin{equation}
\frac{Z_{-}}{Z_{+}}=\frac{1+b}{1-b} \ e^{-2b}.
\label{equation-201}
\end{equation}
We show that for $b$ positive and small enough,  
\begin{align} \label{ineq:b}
 \frac{1+b}{1-b} \ e^{-2b}>1,
\end{align} and therefore $Z_{-}>Z_{+} $. Thus, $Z$ increases in strength as the forward 2-generalized characteristic crosses a shock (from right to left).\\

Notice that  we simply need  that Z increases when   $b  \sim 0 $.   It is for very small oscillations that Z blows-up. Inequality  \eqref{ineq:b} follows from  a Taylor expansion up to the third order:
\begin{align*}
 (1+b)  \exp(-2b) & = (1+b)(1 -2b+ 1/2 \cdot (2b)^2 -1/6 \cdot (2b)^3 + \mathcal{O}(b)^4) \\
   & =  1 + (1-2)  b +   (-2 +2)     b^2  + ( 2- 4/3)           b^3 +   \mathcal{O}(b)^4 \\
  &=   1 -b +  2/3 b^3 + \mathcal{O}(b)^4   \\
  & >    1 -b, 
\end{align*}
 for b sufficiently small  and hence $Z_-> Z_+> 0$.\\

Thus, due to an interaction with a $1-$shock wave, there is a change of order $[u]^3 $ in the $L^{\infty} $ norm of $Z$.
Since 
\begin{equation}
\sum \vert [u] ^3 \vert=\sum_{n\geq 1} \left(\frac{2}{(n+26)^{\frac{1}{3}}} \right)^{3}=8\sum_{n \geq 1} \frac{1}{(n+26)}=+\infty, 
\notag
\end{equation}
we find that the $L^{\infty} $ norm for $Z$ blows up.
%

\begin{remark}  Let $\Gamma(x_0)= \{ (\gamma(x_0,t), t),\, t \geq 0\}$ be the  2-characteristic issued from $x=x_0$ at $t=0$. 
The solution is well defined  under   $\Gamma(1)$ that is on  the set 
$\{ (x,t),  x < \gamma(1,t), t  \geq 0  \}$.  But, over    $\Gamma(1)$, $\{ (x,t),  1 > x  > \gamma(1,t), t  > 0  \}$, $Z$ and $v$ blow up, $v=+\infty$. 
\end{remark}

\begin{remark}
This example does not contradict the Lax-Oleinik smoothing effect (\cite{Lax57}) as the blow-up for $Z$ occurs only at $t=0$, that is, there is an immediate blow-up. Such a blow-up is not possible for a time $t_{0}>0 $ due to the $BV$ smoothing of $u$. 
\end{remark}

Now suppose that there exists a bounded weak solution for the system \eqref{eq:scl}-\eqref{eq:lin} with the above initial data. Then since the solution of the Riemann problem in the class of bounded weak solutions is unique upto the time of first interaction of the waves (see appendix \ref{uniq-riem}), any bounded weak solution will satisfy the construction discussed above. This, in turn, implies that the bounded weak solution exhibits a blow-up in $L^{\infty} $ norm, which contradicts its definition.\\
Thus, there cannot exist a bounded weak solution for the system \eqref{eq:scl}-\eqref{eq:lin} with the above initial data, which proves Theorem \ref{blow} . This also shows that the existence Theorem \ref{main} is optimal in the class of bounded weak solutions.

 \section*{Acknowledgments}     
   A part of this work was done when the last author was visiting  NISER, Bhubaneswar, India.  
   We  also thank the IFCAM project ``$BV^s$, control \& interfaces''  which allowed the interaction  between the Indian hyperbolic community with the French one. We also thank  Boris Haspot for fruitful discussions.

\appendix

\section{On Lipschitz test functions}\label{lipschitz}
Here we briefly discuss the  use of Lipschitz test functions instead of $C^1$ test functions   for bounded weak entropy  solutions for triangular systems \eqref{eq:scl}-\eqref{eq:lin}. We show that if $u,v \in L^{\infty} $ is a weak bounded solution (and therefore the solution constructed in Section \ref{sec:exist}) for the triangular system, then the $C^1$ test functions can be enlargerd to Lipschitz functions. 
This can be proved using the following observations. Note that $(u,v) $ is a weak solution and hence satisfies the integral identities \eqref{int-identity-1}-\eqref{int-identity-2} for compactly supported smooth test functions. Now if we consider a compactly supported Lipschitz continuous test function $\psi$, we can construct (using a standard mollifier) a sequence of compactly supported smooth test functions $\{\phi_{n}\} $ that converges pointwise almost everywhere to $\psi $. \\
An application of dominated convergence theorem shows that $\partial_{t}\phi_{n} $ and $\partial_{x}\phi_{n} $ converge to $\partial_{t}\psi $ and $\partial_{x}\psi $ respectively at points of continuity (and therefore almost everywhere) of $\psi$. \\
Since the integral identities \eqref{int-identity-1}-\eqref{int-identity-2} are satisfied for each $\phi_{n} $, we can use the above facts to pass to the limit in the identities to prove the integral identities for the Lipschitz continuous test function $\psi $.

\section{Uniqueness of Riemann problem in $L^\infty$} 
 \label{uniq-riem}
We next show that in the case of strictly convex fluxes $f$, the solution of the Riemann problem is unique in the class of bounded entropy  solutions at least upto the time of first interaction of waves.
In this direction, we first note the following result.
\begin{lemma}
If $(u,v)$ is a bounded entropy solution to the system \eqref{eq:scl}-\eqref{eq:lin} with initial data $(u_{0},v_{0})\in L^{\infty}(\mathbb{R}) \times L^{\infty}(\mathbb{R}) $, 
then $Z:=v\ \textnormal{exp}(A(u))$ satisfies
\begin{equation}
\partial_{t}Z+a(u) \partial_{x}Z\stackrel{E}{=}0, 
\label{Z-identity}
\end{equation}
in open sets where $u$ is Lipschitz continuous.\\
 Here $E:= Lip_{c}'(\R \times [0,+\infty[, \R) $ denotes the dual of the space of compactly supported Lipschitz continuous functions.
\end{lemma}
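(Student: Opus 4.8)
The plan is to extract the transport identity for $Z$ directly from the weak formulation \eqref{int-identity-2} of the linear equation \eqref{eq:lin} for $v$, by testing it against a carefully adapted Lipschitz test function. The point is that on an open set $\Omega$ where $u$ is Lipschitz continuous, $u$ is a genuine almost-everywhere solution of \eqref{eq:scl} (there is no shock contribution), and $e^{A(u)}$ is itself Lipschitz and bounded on $\Omega$, so it may be folded into a test function.

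First I would record the consequences of the hypothesis. On $\Omega$ the functions $u$, $f(u)$, $a(u)$, $A(u)$ and $e^{A(u)}$ are Lipschitz and bounded, with $\|u\|_{L^\infty(\Omega)}\le M$; in particular $A'=a'/(a-f')$ is continuous there by \eqref{USH}, so the chain rule for Lipschitz compositions gives $\partial_t e^{A(u)}=A'(u)e^{A(u)}\partial_t u$ and $\partial_x e^{A(u)}=A'(u)e^{A(u)}\partial_x u$ a.e. on $\Omega$. Moreover, since $(u,v)$ is a weak solution, $\partial_t u+\partial_x f(u)=0$ in $\mathcal{D}'(\Omega)$; as $u$ and $f(u)$ are Lipschitz on $\Omega$ their distributional derivatives are $L^\infty$ functions and $\partial_x f(u)=f'(u)\partial_x u$ a.e., so
\[
\partial_t u+f'(u)\,\partial_x u=0 \qquad \text{a.e. on } \Omega ,
\]
whence $\partial_t u+a(u)\,\partial_x u=(a(u)-f'(u))\,\partial_x u$ a.e. on $\Omega$.

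Now take $\varphi\in\mathrm{Lip}_c(\Omega)$ and set $\psi:=\varphi\,e^{A(u)}$. Since $e^{A(u)}$ is Lipschitz and bounded on $\Omega$, $\psi$ is again a compactly supported Lipschitz function, hence admissible in \eqref{int-identity-2} by the Lipschitz-test-function version of the weak formulation (Appendix \ref{lipschitz}); if $\Omega\subset\{t>0\}$ its support avoids $\{t=0\}$ and the initial term drops. Expanding $\partial_t\psi$ and $\partial_x\psi$ with the Leibniz rule and the chain rule above and substituting $Z=v\,e^{A(u)}$ turns \eqref{int-identity-2} into
\[
\int_0^{+\infty}\!\!\int_{\R} Z\Big(\partial_t\varphi+a(u)\,\partial_x\varphi+A'(u)\big(\partial_t u+a(u)\,\partial_x u\big)\varphi\Big)\,dx\,dt=0 .
\]
Inserting $\partial_t u+a(u)\partial_x u=(a(u)-f'(u))\partial_x u$ and using $A'(u)\,(a(u)-f'(u))=a'(u)$ by the very definition of $A$, the last term becomes $a'(u)\,\partial_x u\,\varphi$, so
\[
\int_0^{+\infty}\!\!\int_{\R} Z\Big(\partial_t\varphi+a(u)\,\partial_x\varphi+a'(u)\,\partial_x u\,\varphi\Big)\,dx\,dt=0 ,
\]
which, integrating by parts in the other direction, is exactly $\partial_t Z+a(u)\partial_x Z\stackrel{E}{=}0$ on $\Omega$ — note that $a(u)\partial_x Z$ is a well-defined distribution there since $a(u)\partial_x Z=\partial_x(a(u)Z)-a'(u)\,\partial_x u\,Z$ with $a(u)Z\in L^\infty$ and $a'(u)\,\partial_x u\,Z\in L^\infty(\Omega)$. (If $\Omega$ is allowed to meet $\{t=0\}$ one keeps the additional term $\int_{\R}Z_0\,\varphi(x,0)\,dx$, $Z_0=v_0\,e^{A(u_0)}$.)

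I expect the delicate points to be bookkeeping rather than substantive: one must check that $\psi=\varphi\,e^{A(u)}$ is genuinely an admissible compactly supported Lipschitz test function — which is precisely why the hypothesis ``$u$ Lipschitz on $\Omega$'' is imposed — and that the Leibniz and chain rules are legitimate for products and compositions of Lipschitz functions, together with the density argument of Appendix \ref{lipschitz} enlarging the class of test functions. Only the weak-solution property is used here; the entropy inequality would matter only across the jump set of $u$, which is excluded from $\Omega$.
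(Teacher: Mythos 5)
Your proposal is correct and follows essentially the same route as the paper's proof: both fold the factor $e^{\pm A(u)}$ into a Lipschitz test function, invoke the weak formulation \eqref{int-identity-2} for $v$, and cancel the extra terms using the a.e.\ identity $\partial_t u+f'(u)\partial_x u=0$ on the Lipschitz set together with $A'=a'/(a-f')$. The only (cosmetic) difference is the direction of the substitution — you set $\psi=\varphi\,e^{A(u)}$ in the $v$-equation, while the paper writes an arbitrary $\psi$ as $\phi\,e^{-A(u)}$ and reduces the $Z$-identity to the $v$-identity.
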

\begin{proof}
Since $u $ is Lipschitz continuous and $A\in C^{1}(\mathbb{R}) $, we can write any $\psi \in Lip_{c} $ in the form $\psi=\phi\ \text{exp}(-A(u)) $ for a suitable $\phi \in Lip_{c} $.
 Therefore, for $\psi \in Lip_{c}(\R \times [0,+\infty[, \R) $, we can write,
 \begin{equation}
 \begin{aligned}
 &-\int_{0}^{\infty} \int_{\mathbb{R}} \left[Z \partial_{t}\psi+Z \partial_{x}(a(u)\psi) \right]\ dx dt+\int_{\mathbb{R}} Z_{0}(x) \psi(x,0)\ dx \\
 &=-\int_{0}^{\infty} \int_{\mathbb{R}} \left[Z\ \partial_{t}\left(\phi\ \text{exp}(-A(u)) \right)+Z\ \partial_{x}\left(a(u)\phi\ \text{exp}(-A(u))\right) \right]\ dx dt\\ 
 &\qquad +\int_{\mathbb{R}} Z_{0}(x) \phi(x,0)\ \text{exp}(-A(u_{0}))\ dx \\
 &=-\int_{0}^{\infty} \int_{\mathbb{R}} \left[Z\ \text{exp}(-A(u))\ \partial_{t}\phi+Z\ a(u)\ \text{exp}(-A(u))\ \partial_{x}\phi \right]\ dx dt\\ 
 &\qquad +\int_{\mathbb{R}} Z_{0}(x) \phi(x,0)\ \text{exp}(-A(u_{0}))\ dx \\
 &\qquad +\int_{0}^{\infty} \int_{\mathbb{R}} \left[Z A'(u)\partial_{t}u \ \text{exp}(-A(u)) \phi+Z A'(u)\partial_{x}u \ \phi a(u) \text{exp}(-A(u))\right]\ dx dt\\
 &\qquad -\int_{0}^{\infty} \int_{\mathbb{R}} Z \ \text{exp}(-A(u))\phi a'(u)\partial_{x}u\ dx dt\\
 &=\int_{0}^{\infty} \int_{\mathbb{R}} Z\phi\ \text{exp}(-A(u)) \left(A'(u)\partial_{t}u+A'(u)a(u)\partial_{x}u-a'(u)\partial_{x}u \right)\ dx dt,
 \end{aligned}
 \label{z1}
 \end{equation}
 where we use the fact that $v$ satisfies the integral identity \eqref{int-identity-2} for any $\phi \in Lip_{c}(\R \times [0,+\infty[, \R) $.\\
 Now $\partial_{t}u $ and $\partial_{x}f(u) $ belong to $L^{\infty} $ and hence in $L^{1}_{loc} $, and also satisfy the integral identity \eqref{int-identity-1} for all compactly supported smooth functions. Therefore $\partial_{t}u=-\partial_{x}f(u) $ almost everywhere. Also using the fact that $f \in C^{1} $ and $u$ is continuous, using Volpert's formula \cite{Volpert}, it follows that $-\partial_{x}f(u)=f'(u)\partial_{x}u $ almost everywhere. \\
 Using these facts and the definition of $A$ in \eqref{z1}, it follows that
 \begin{equation}
 -\int_{0}^{\infty} \int_{\mathbb{R}} \left[Z \partial_{t}\psi+Z \partial_{x}(a(u)\psi) \right]\ dx dt+\int_{\mathbb{R}} Z_{0}(x) \psi(x,0)\ dx=0,
 \notag
 \end{equation}
 for $\psi \in Lip_{c}(\R \times [0,+\infty[, \R) $.
\end{proof}
Now suppose that $f$ is strictly convex and consider the Riemann problem with left and right states given by $(u_{-},v_{-}) $ and $(u_{+},v_{+}) $ respectively. Let the corresponding states in $(u,Z) $ coordinates be given by $ (u_{-},Z_{-})$ and $(u_{+},Z_{+}) $ respectively. Without loss of generality, we assume that the Riemann problem is based at the point $(0,0) $.

First we consider the case when there is a rarefaction wave in $u$. Note that in this case $u$ is a Lipschitz continuous function away from a ball $B(0,r) $ of radius $r$ around $(0,0) $, for any $r>0$. \\
From the previous lemma, we conclude that away from the ball $B(0,r) $, $Z$ (corresponding to any bounded weak solution $(u,v) $) satisfies the identity \eqref{Z-identity} and by unicity of $u$ and the unicity of weak solutions of a transport equation with Lipschitz continuous velocity field, $Z$ is uniquely determined.\\
Since this is true for any $r>0$, it follows that in the class of bounded weak solutions, $(u,Z) $ (and therefore $(u,v) $) is unique away from the set $\{x=a(u_{-})t \} $ of measure zero.   

Next let us consider the case when there is a shock wave in $u$. In this case, using the previous lemma, we see that $Z$ (corresponding to any bounded weak solution $ (u,v)$) satisfies the transport equation \eqref{Z-identity} with velocity $a(u_{-})$ in the region 
\[\left\{(x,t): -\infty < \frac{x}{t} <s:=\frac{[f(u)]}{[u]} \right\},\] 
and therefore is uniquely determined in this region and has a trace on the line $x=st $.\\ Similarly, $Z$ satisfies the transport equation \eqref{Z-identity} with velocity $a(u_{+})$ in the region 
\[\left\{(x,t): s < \frac{x}{t} < +\infty\right\},\] 
and therefore is uniquely determined in this region as well.\\
But any bounded weak solution $(u,v) $ satisfies the same Rankine-Hugoniot condition across the shock curve, and therefore, the left trace of $Z$ on the line  $x=st $ is uniquely determined by the states $u_{-},u_{+},v_{+} $, which implies the unicity of $Z$ as well.\\
Therefore,  for a strictly convex flux $f$, at least upto the time of first interaction of waves, the solution of the Riemann problem is unique in the class of bounded entropy solutions. 

Note that the above proof holds for initial data $u_{0}$ which gives rise to entropy solutions $u$ that are locally Lipschitz except for a finite number of lines, and hence applies to the initial data that we construct in Section \ref{sec:boum}.
Uniqueness for the slightly smaller class of picewise $C^1$ entropy solutions  are well known   \cite{Lax book} and has been already obtained for the (PSA) system \cite{BGJ1}. 

Notice also that  for the Kruzkov solution  $u$ a continuity in time with respect to $L^1_{loc}$ is space is required \cite{K}, but for a nonlinear flux this property is automatically fullfilled  \cite{CR}.
Notice that since $u_0$ belongs to $BV^s$, $s>0$, the solution is continuous in time with values in $L^1_{loc}$ in space \cite{BGJ6,JR1}. Thus, the initial data is recovered.

\bibliographystyle{plain}

\date{\today}

\end{document}